\renewcommand\expandafter\subsection\expandafter{%
    \expandafter\@fb@secFB\subsection
  }%
\newtheorem{theorem}{Theorem}[section]
\newtheorem{lemma}[theorem]{Lemma}
\newtheorem{proposition}[theorem]{Proposition}
\newtheorem{corollary}[theorem]{Corollary}
\theoremstyle{definition}
\theoremstyle{remark}
\newtheorem{remark}[theorem]{Remark}
\numberwithin{equation}{section}
\newcommand{\new}[1]{{\color{red}{#1}}}
\newcommand{\blue}[1]{{\color{blue}{#1}}}
\newcommand{\1}{\mathbb 1}
\newcommand{\R}{\mathbb R}
\newcommand{\N}{\mathbb N}
\newcommand{\tria}[1][]{{\mathcal T}_{#1}} 
\newcommand{\nodes}{{\mathcal V}_{\tria}}
\newcommand{\intnodes}{\nodes^\mathrm{int}}
\newcommand{\bdrnodes}{\nodes^\mathrm{bdr}}
\newcommand{\edges}[1][]{{\mathcal E}_{#1}}
\newcommand{\intedges}[1][]{{\mathcal E}^\mathrm{int}_{#1}}
\newcommand{\bdredges}[1][]{{\mathcal E}^\mathrm{bdr}_{#1}}
\newlength{\leftstackrelawd}
\newlength{\leftstackrelbwd}
\def\leftstackrel#1#2{\settowidth{\leftstackrelawd}%
{${{}^{#1}}$}\settowidth{\leftstackrelbwd}{$#2$}%
\addtolength{\leftstackrelawd}{-\leftstackrelbwd}%
\leavevmode\ifthenelse{\lengthtest{\leftstackrelawd>0pt}}%
{\kern-.5\leftstackrelawd}{}\mathrel{\mathop{#2}\limits^{#1}}}
\DeclareMathOperator{\ran}{range}
\DeclareMathOperator{\vol}{vol}
\DeclareMathOperator{\meas}{meas}
\DeclareMathOperator*{\argmin}{argmin}
\DeclareMathOperator{\supp}{supp}
\DeclareMathOperator{\Span}{span}
\DeclareMathOperator{\divv}{div}
\DeclareMathOperator{\curl}{curl}
\DeclareMathOperator{\diam}{diam}
\DeclareMathOperator{\osc}{osc}
\renewcommand{\P}{\mathcal P}
\newcommand{\RT}{\mathcal{RT}}
\title{On p-Robust Saturation for hp-AFEM}
\thanks{The first and fourth authors have been partially supported by the Italian
research grant Prin 2012 2012HBLYE4 ``Metodologie innovative nella modellistica
differenziale numerica" and by INdAM-GNCS. The second author has
been partially supported by the NSF grant DMS 1418994 and the Institut
Henri Poincar\'e (Paris).}
\date{\today}
\author[C.Canuto]{Claudio~Canuto}
\address{Dipartimento di Scienze Matematiche,
Politecnico di Torino,
Corso Duca degli Abruzzi 24,
I-10129 Torino, Italy} 
\email{claudio.canuto@polito.it}
\author[R.H.Nochetto]{Ricardo~H.~Nochetto}
\address{Department of Mathematics and Institute for Physical Science
and Technology, University of Maryland, College Park, Maryland 20742, USA}
\email{rhn@math.umd.edu}
\author[R.Stevenson]{Rob~Stevenson}%
\address{Korteweg-de Vries Institute for Mathematics,
University of Amsterdam,
P.O. Box 94248,
1090 GE Amsterdam, The Netherlands}
\email{r.p.stevenson@uva.nl}
\author[M.Verani]{Marco~Verani}
\address{MOX-Dipartimento di Matematica, Politecnico di Milano, P.zza Leonardo Da Vinci 32, I-20133 Milano, Italy}
\email{marco.verani@polimi.it}
\subjclass[2010]{%
65N30, 
65N12 
}
\keywords{$hp$-finite element method, adaptivity, convergence}
\begin{document}
\begin{abstract}
For the Poisson problem in two dimensions,
we consider the standard adaptive finite element loop {\bf solve}, {\bf estimate}, {\bf mark}, {\bf refine}, with 
{\bf estimate} being implemented using the $p$-robust equilibrated flux estimator, and, {\bf mark} being D\"{o}rfler marking.
As a refinement strategy we employ $p$-refinement. We investigate the question by which amount the local polynomial degree on any marked patch has to be increase in order to achieve a $p$-independent error reduction.
The resulting adaptive method can be turned into an instance optimal $hp$-adaptive method by the addition of a coarsening routine.
\end{abstract}

\maketitle
\section{Introduction} \label{Sintroduction}
Since the work of Babu{\v{s}}ka and co-workers in the 1980s, see e.g. \cite{77.55, 77.56}, it is known that for elliptic boundary value problems with sufficiently smooth coefficients and data, a proper combination of $h$-refinement and $p$-enrichment can yield a sequence of finite element solutions that converge exponentially fast to the solution.
Existing convergence results on $hp$-finite elements mainly concern
methods where an {\em a priori} decision about $h$- or $p$-improvement
is based on the decomposition of the solution into
smooth parts and known singular functions associated to corners or edges of the boundary.

In practice, one rather uses $hp$-{\em adaptive finite element
methods ($hp$-AFEMs)} driven by a posteriori error estimators.
Apart from identifying the elements where the
current approximation error is `relatively large',
and therefore need to be refined, 
for each of those elements $hp$-AFEMs have to decide whether
it is appropriate to perform either an $h$-refinement or a $p$-enrichment.
Ideally such a decision depends on the local smoothness of the exact
solution, which is however unknown.
Many proposals have been made to estimate the local smoothness from
information extracted from the computed sequence of finite
element solutions and the right-hand side.
Only a few of these methods have been proven to converge (e.g. \cite{69.1}), but none of them has been shown to yield exponential convergence rates.

In \cite{35.99} we followed a different approach: we extended the $hp$-AFEM with the $hp$-coarsening routine constructed by Binev in \cite{22.556}.
This routine is called after each sequence of adaptive
enhancement steps that reduces the error with a fixed, sufficiently large factor.
The application of coarsening generally makes the error larger, but it yields an $hp$-partition that is {\em instance optimal}.
That is, the best approximation error from the associated $hp$-finite element space is at most a constant factor larger than that from any 
$hp$-finite element space with a dimension that is at most a constant
factor smaller.
In particular, this means that if the solution can be approximated from $hp$-finite element spaces with an exponential rate, then the sequence
of approximations produced by our $hp$-AFEM converges exponentially to the solution (see \cite{35.99} for more details).

What remains is to specify a method that, between every two consecutive calls of coarsening, reduces the error with a sufficiently large fixed factor, and that
runs at an acceptable cost $F$, measured in terms
of floating point operations.
In doing so, we simply assume that the arising linear systems are solved exactly (whereas an inexact solve within a sufficiently small relative tolerance would be sufficient),
and moreover make the admittedly disputable assumption
that this solving is achieved at a cost that is (uniformly) proportional to the dimension of the system.
In this setting, the ideal situation would be that $F$ is proportional to
the dimension $N$ of the finite element space.
In view of the envisaged exponential decay rate
$\exp(-\eta N^\alpha)$ of the error,
a polynomial cost $F=\Lambda N^k$, with $\Lambda>0$ and $k>1$, could still be regarded as
acceptable because the error decay rate in terms of $F$
would still be exponential, namely $\exp(-\eta \Lambda^{-\alpha/k} F^{\alpha/k})$.
Note, however, the suboptimal exponent $\alpha/k < \alpha$.

In \cite{35.99}, we examined the Poisson problem in two space
dimensions
\begin{equation}\label{poisson}
-\triangle u = f \quad\text{ in } \Omega,
\qquad
u = 0 \quad \text{ on } \partial \Omega,
\end{equation}
and used as the `error reducer' a number of $h$-refinements of the elements
that were selected by bulk chasing, also known as D\"{o}rfler marking,
using the a posteriori error estimator of Melenk and Wohlmuth
\cite{MW01}. This estimator is sensitive to the polynomial degree
that generally varies over the partition.
To account for this deficiency, we showed that a number of iterations
that grows as $p^{2+\epsilon}$, here with $p$ denoting the maximal polynomial degree, 
is sufficient to achieve a fixed error reduction. Unfortunately, this
does not lead to an acceptable cost $F$. To see this, consider the 
extreme situation of having a partition $\tria$ consisting of one
single element with polynomial degree $p$, whence with finite
element space dimension $N$ proportional to $p^2$. Since we have
to perform $p^{2+\epsilon}\approx N^{1+\epsilon/2}$ steps, and each bisection step
multiplies the space dimension by $2$, we infer that the
total cost would grow as $F \approx N 2^{N^{1+\epsilon/2}}$ which is
exponential rather than polynomial.

In the present work, we consider D\"{o}rfler marking based on the
equilibrated flux estimator, which has been shown to be $p$-robust by
Braess, Pillwein and Sch\"{o}berl in \cite{33}.
We use this estimator to mark patches of elements around vertices
(stars) and to execute $p$-refinements exclusively.
To state the type of question we are after, for the ease of
presentation let us for the moment assume that the
polynomial degree is uniform over the mesh and equal to $p$.
In such a case, we denote by $u_p$ the Galerkin solution
and pose the following question:

\vskip0.2cm
\begin{minipage}{0.9\linewidth}
{\it Which increase $q = q(p)$ of polynomial degree 
is able to reduce the error by some fixed factor
$\alpha<1$ independent of $p$, namely}
\end{minipage}
\begin{equation}\label{contraction}
  \|\nabla(u-u_{p+q})\|_{L_2(\Omega)} \le \alpha
  \|\nabla(u-u_p)\|_{L_2(\Omega)}.
\end{equation}
Since the functions $\nabla(u-u_{p+q})$ and
$\nabla(u_{p+q} - u_p)$ are orthogonal in $L_2(\Omega)$ 
due to Galerkin orthogonality, \eqref{contraction} is equivalent to
the {\it saturation} property
\begin{equation}\label{saturation}
  \beta \|\nabla(u - u_p)\|_{L_2(\Omega)} \le 
  \|\nabla(u_{p+q} - u_p)\|_{L_2(\Omega)} 
\end{equation}
with a constant $0<\beta<1$ independent of $p$; consequently
$\alpha = \sqrt{1-\beta^2}$.

In this paper we develop a local version of \eqref{saturation},
written on stars in terms of negative norms of residuals, and show
that combined with D\"orfler marking of stars it yields a $p$-robust
contraction property for $hp$-AFEMs.
Our main result is a reduction of such a local saturation property to
three Poisson problems on the reference triangle
with interior or boundary sources which are polynomials of either
degree $p-1$ or $p$.
Numerical computations for these auxiliary problems suggest that
uniform saturation is achieved by increasing the polynomial degree $p$
by an additive quantity $q(p)=\lceil \lambda p\rceil$ for any fixed
constant $\lambda > 0$, whereas an increase of the form $q(p)=m$
for some constant $m \in \N$ seems insufficient.
Since multiplying $p$ by a constant factor
multiplies the dimension of the local finite element space by not
more than a constant factor, this type of $p$-enrichment meets
the desired cost condition. In fact, it leads to an ideal
cost proportional to the space dimension.

Saturation conditions such as \eqref{contraction} have been used in a
posteriori error analysis for $h$-refinement. We refer to
\cite{BW85,BEK:96}, where \eqref{contraction} is assumed to hold for
piecewise linear and quadratic finite elements, and to \cite{Noch93}
which shows that \eqref{contraction} can be removed as an assumption. The relation
between \eqref{contraction} and the relative size of data oscillation
for piecewise linear and quadratic finite elements
have been examined by D\"orfler and Nochetto \cite{DN02}. Our
abstract $hp$-AFEM of Sect. \ref{SRconvergence} bears a
resemblance to the construction in \cite{DN02}.

This work is organized as follows. In Sect.~\ref{Smodel} we describe
the model problem. In Sect.~\ref{Slocalizedresiduals} we give error
estimators in terms of
negative norms of localized residuals that are defined on stars. These error estimators are shown to be equivalent to computable estimators in Sect.~\ref{Sequivestimators}.
Under a saturation assumption on the marked stars, that mimics
\eqref{saturation} locally but is expressed in terms of negative norms
of residuals, we prove in Sect.~\ref{SRconvergence} a $p$-robust
contraction property for an abstract $hp$-AFEM.
In Sect.~\ref{Ssaturation}, these saturation assumptions on stars are reduced to
the question of saturation for three Poisson problems on a reference triangle with polynomial source terms of degree $p$.
Finally, in Sect.~\ref{Sconstants}, saturation constants are computed numerically for a range of $p$'s, and trial spaces of degree $p+q(p)$ for various choices of the function $q$.
A conclusion is drawn in Sect.~\ref{Sconclusion}.

In this work, by $C \lesssim D$ we will mean that $C$ can be bounded by a multiple of $D$, independently of parameters which C and D may depend on, where in particular we have in mind the mesh partition, and the polynomial degrees.
Obviously, $C \gtrsim D$ is defined as $D \lesssim C$, and $C\eqsim D$ as $C\lesssim D$ and $C \gtrsim D$.

For any measurable $\omega \subset \R^n$,  with $\langle\cdot,,\cdot\rangle_\omega$ and $\|\cdot\|_\omega$ we denote the $L_2(\omega)$- or $L_2(\omega)^n$-inner product and norm, respectively.
For any closed subspace of $H^1(\omega)$ on which
$\|\nabla\cdot\|_\omega$ is equivalent to $\|\cdot\|_{H^1(\omega)}$
(typically, subspaces defined by vanishing traces on non-negligible
parts of $\partial\omega$, or by vanishing mean-values), we think
always of this subspace as being equipped with
$\|\nabla\cdot\|_\omega$, and so its dual as being equipped with the
resulting dual norm.

\section{Model problem} \label{Smodel}

We consider the Poisson problem \eqref{poisson}
in a polygon $\Omega \subset \R^2$, and for a $f \in L_2(\Omega)$.
Our results can easily be generalized to other boundary conditions,
and elliptic operators $-\divv (A \nabla\cdot)$ provided $A=A^\top>0$ is piecewise
constant w.r.t. to any partition $\tria$ encountered by the $hp$-AFEM.
Generalization to three space dimensions is likely possible using an
extension, announced in \cite{309}, of the proof  in \cite{33}  of
$p$-robustness of equilibrated residual a posteriori error estimators
from two to three dimensions.

Let $U_{\tria} \subset H^1_0(\Omega)$ be a space of continuous
piecewise polynomials, of variable degree $p_{\tria}$,
w.r.t. a conforming partition $\tria$ of $\Omega$ into triangles.
We assume that $\tria$ belongs to a class of uniformly shape regular triangulations.
For $T \in \tria$, we define
\begin{equation}\label{elem-degree}
p_T=p_{\tria,T}
\end{equation}
as the smallest integer such that the restriction to $T$ of any function in $U_{\tria}$ is in $\P_{p_T}(T)$.
Note that the combination of a possible variable degree, and
$U_{\tria} \subset H^1_0(\Omega)$ generally prevents $U_{\tria}|_T$
from being the complete space $\P_{p_T}(T)$. We impose, however, that
in any case $U_{\tria} \supseteq H^1_0(\Omega) \cap \prod_{T \in \tria} \P_1(T)$,
namely that continuous piecewise affine functions are contained in $U_{\tria}$.

Let $u$ and $u_{\tria}$ denote the exact solution of our boundary value problem and its Galerkin approximation from $U_{\tria}$, respectively.

Let $\nodes$ ($\edges[\tria]$) denote the collection of vertices (edges) of $\tria$ subdivided into the interior vertices (edges) $\intnodes$ ($\intedges[\tria]$) and boundary vertices (edges) $\bdrnodes$ ($\bdredges[\tria]$).
For any $e \in \edges[\tria]$, $\vec{n}_e$ stands for a unit vector normal to $e$. The operator $\llbracket \,\rrbracket$ yields the jump, in the direction of $\vec{n}_e$, of the traces of the argument from the two triangles that share $e \in \intedges[\tria]$, and the actual trace for $e \in \bdredges[\tria]$.

\section{Localizing the residual to stars} \label{Slocalizedresiduals}
For $a \in \nodes$, let $\psi_a=\psi_{\tria,a}$ denote the continuous
piecewise linear `hat' function w.r.t. $\tria$ that has value $1$ at
$a$ and that is zero at all other vertices.
Let $\omega_a=\omega_{\tria,a}$ denote the star centered at $a$,
namely the interior of the union of
the $T \in \tria$ that share the vertex $a$; note that
$\overline{\omega}_a = \supp \psi_a$. Let
$\tria[a]=\tria[\tria,a]$ denote the triangulation $\tria$ restricted to $\omega_a$, and let $\edges[a]=\edges[\tria,a]$ ($\intedges[a]$, $\bdredges[a]$ ) denote the collection of its (interior ($a \in e$), boundary ($a \not\in e$)) edges.
We define
\begin{equation} \label{pa}
p_a=p_{a,\tria}:=\max_{T \in \omega_a} p_T, \qquad \vec{p}_a=\vec{p}_{a,\tria}=(p_T)_{T \in \omega_a}.
\end{equation}

For $a \in \nodes$, we define
$$
H_\ast^1(\omega_a):=\left\{
\begin{array}{ll}
\{v \in H^1(\omega_a)\colon \langle v,\1\rangle_{\omega_a}=0\} & a \in \intnodes,\\
\{v \in H^1(\omega_a)\colon v=0 \text{ on } \partial\omega_a \cap \partial\Omega\} & a \in \bdrnodes,
\end{array}
\right.
$$
equipped with $\|\nabla \cdot\|_{\omega_a}$.

For $v \in H^1(\Omega)$, we define the global and localized residual functionals by
\begin{align*}
r_{\tria}(v)&:=\langle f,v\rangle_{\Omega}-\langle \nabla u_{\tria},\nabla v \rangle_{\Omega},\\
r_a(v) =r_{\tria,a}(v)&:=r_{\tria}(\psi_a v)
= \sum_{T \in \tria[a]} \int_T v \psi_a (f+\triangle u_{\tria})+\sum_{e\in \intedges[a]} \int_e v \psi_a \llbracket \nabla u_{\tria}\cdot\vec{n}_e \rrbracket.
\end{align*}

\begin{proposition}[reliability and efficiency] \label{prop1}
There exists $C_1>0$ such that
$$
\|\nabla(u-u_{\tria})\|^2_{\Omega} \leq 3 \sum_{a \in \nodes} \|r_a\|^2_{H_\ast^1(\omega_a)'},\quad \|r_a\|_{H_\ast^1(\omega_a)'} \leq C_1 \|\nabla(u-u_{\tria})\|_{\omega_a} \quad(a \in \nodes).
$$
\end{proposition}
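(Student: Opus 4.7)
The plan is to exploit the partition-of-unity identity $\sum_{a \in \nodes}\psi_a\equiv 1$ on $\overline\Omega$ together with Galerkin orthogonality. Setting $e:=u-u_{\tria}\in H^1_0(\Omega)$, one immediately has
$$\|\nabla e\|^2_\Omega = r_{\tria}(e) = \sum_{a\in\nodes} r_{\tria}(\psi_a e) = \sum_{a\in\nodes} r_a(e),$$
so the two inequalities amount to controlling $r_a(e)$ and, respectively, $r_a(v)$ for arbitrary $v\in H_\ast^1(\omega_a)$.

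For reliability, I would proceed case by case. If $a\in\bdrnodes$, then $e|_{\omega_a}$ already lies in $H_\ast^1(\omega_a)$ since $e\in H^1_0(\Omega)$. If $a\in\intnodes$, the standing assumption $U_{\tria}\supseteq H^1_0(\Omega)\cap\prod_{T\in\tria}\P_1(T)$ gives $\psi_a\in U_{\tria}$, so Galerkin orthogonality forces $r_{\tria}(\psi_a)=0$ and hence $r_a$ annihilates constants; one may therefore test with $e-\bar e_{\omega_a}\in H_\ast^1(\omega_a)$ instead. In either case $|r_a(e)|\le\|r_a\|_{H_\ast^1(\omega_a)'}\|\nabla e\|_{\omega_a}$. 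Cauchy--Schwarz over $a\in\nodes$, combined with the combinatorial identity $\sum_{a\in\nodes}\|\nabla e\|^2_{\omega_a}=3\|\nabla e\|^2_\Omega$ (each triangle contributes to exactly three stars), then delivers, after dividing by $\|\nabla e\|_\Omega$, the asserted factor $3$.

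For efficiency, I would fix $a\in\nodes$ and $v\in H_\ast^1(\omega_a)$. Because $\psi_a$ vanishes on $\partial\omega_a\setminus\partial\Omega$ and, in the boundary case, $v$ vanishes on $\partial\omega_a\cap\partial\Omega$, the extension of $\psi_a v$ by zero is an element of $H^1_0(\Omega)$. Substituting the weak equations for $u$ and $u_{\tria}$ into the definition of $r_a$ yields
$$r_a(v)=\langle\nabla(u-u_{\tria}),\nabla(\psi_a v)\rangle_{\omega_a}.$$
From here I would expand $\nabla(\psi_a v)=v\nabla\psi_a+\psi_a\nabla v$, use $\|\psi_a\|_\infty\le 1$ and $\|\nabla\psi_a\|_\infty\lesssim h_a^{-1}$ (shape regularity), and invoke the Poincar\'e/Friedrichs inequality $\|v\|_{\omega_a}\lesssim h_a\|\nabla v\|_{\omega_a}$ on $H_\ast^1(\omega_a)$ with a shape-regularity-uniform constant, obtaining $\|\nabla(\psi_a v)\|_{\omega_a}\lesssim\|\nabla v\|_{\omega_a}$; a final Cauchy--Schwarz then gives the efficiency bound.

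The main delicacy is the interplay between the two flavours of $H_\ast^1(\omega_a)$ and the two roles of $\psi_a$: the mean-zero condition for interior vertices is tailored exactly so that constants can be discarded, which is legitimate only because $\psi_a\in U_{\tria}$, while the vanishing-trace condition for boundary vertices is what makes $\psi_a v$ an admissible global test function in $H^1_0(\Omega)$. Everything else is standard, provided the Poincar\'e/Friedrichs constants are known to be uniform over the shape-regular class.
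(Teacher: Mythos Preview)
Your proof is correct and follows essentially the same route as the paper. The only cosmetic difference is that for reliability the paper bounds $r_{\tria}(v)$ for an arbitrary $v\in H^1_0(\Omega)$ and then takes the supremum, whereas you test directly with $e=u-u_{\tria}$; the subsequent case distinction (subtracting the mean for interior vertices, using the built-in trace condition for boundary vertices), the overlap count of three, and the Poincar\'e/Friedrichs argument for $\|\nabla(\psi_a v)\|_{\omega_a}\lesssim\|\nabla v\|_{\omega_a}$ are identical to the paper's.
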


\begin{proof}
Thanks to $\psi_a \in U_{\tria}$, and $u_{\tria}$ being the Galerkin solution from $U_{\tria}$ (cf. Remark~\ref{rem2}), we have
$$
r_a(\1)=0 \quad (a \in \intnodes).
$$
Since furthermore $\sum_{a \in \nodes} \psi_a=\1$, 
for $v \in H^1_0(\Omega)$ we have
\begin{align*}
r_{\tria}(v)=\sum_{a \in \nodes} r_a(v)&=\sum_{a \in \bdrnodes} r_a(v)+\sum_{a \in \intnodes} r_a\Big(v-\frac{\int_{\omega_a} v}{\vol{\omega_a}}\1\Big)
\\
& \leq \sum_{a \in \nodes} \|r_a\|_{H^1_\ast(\omega_a)'} \|\nabla v\|_{\omega_a} \\
& \leq \sqrt{3}  \|\nabla v\|_{\Omega} \sqrt{\sum_{a\in \nodes} \|r_a\|^2_{H^1_*(\omega_a)'}},
\end{align*}
where we used that each triangle in $\tria$ is contained in at most three patches $\omega_a$.
From $\|\nabla(u-u_{\tria})\|_{\Omega} = \sup_{0 \neq v \in H^1_0(\Omega)} r_{\tria}(v)/\|\nabla v\|_{\Omega}$, we arrive at the first result.

Conversely, for $v \in H^1_*(\omega_a)$ we have that $r_a(v) \leq \|\nabla (u-u_{\tria})\|_{\omega_a}\|\nabla(\psi_a v)\|_{\omega_a}$.
By either applying the Poincar\'{e} inequality for $a \in \intnodes$ or the Friedrichs inequality for $a \in \bdrnodes$, we have 
$\|\nabla(\psi_a v)\|_{\omega_a} \lesssim \|\nabla  v\|_{\omega_a}$ (\cite[p.8]{70.8}), which yields the second result.
\end{proof}

Exactly the same proof of Proposition \ref{prop1} shows the following result:
\begin{proposition}[discrete reliability and efficiency] \label{prop3}
Let $\bar{U} \supset U_{\tria}$ be a closed subspace of
$H^1_0(\Omega)$ with Galerkin solution w.r.t. $\bar{U}$ denoted by
$\bar{u}$. We then have for all $a \in \nodes$
$$
\|\nabla(\bar{u}-u_{\tria})\|^2_{\Omega} \leq 3 \sum_{a \in \nodes} \|r_a\|^2_{(H_\ast^1(\omega_a)\cap \bar{U})'},\quad \|r_a\|_{(H_\ast^1(\omega_a)\cap \bar{U})'} \leq C_1 \|\nabla(\bar{u}-u_{\tria})\|_{\omega_a}.
$$
\end{proposition}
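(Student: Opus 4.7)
The plan is to transcribe the proof of Proposition~\ref{prop1} verbatim, substituting the Galerkin approximation $\bar u\in\bar U$ for the exact solution $u$ and the test space $\bar U$ for $H^1_0(\Omega)$. The only genuinely new ingredient is the discrete duality identity: since $u_\tria\in U_\tria\subset\bar U$ and $\bar u$ is the Galerkin projection onto $\bar U$, one has $r_\tria(v)=\langle\nabla(\bar u-u_\tria),\nabla v\rangle_\Omega$ for every $v\in\bar U$, whence
$$
\|\nabla(\bar u-u_\tria)\|_\Omega=\sup_{0\ne v\in\bar U}\frac{r_\tria(v)}{\|\nabla v\|_\Omega}.
$$
This plays the role that $\|\nabla(u-u_\tria)\|_\Omega=\sup_{v\in H^1_0(\Omega)}r_\tria(v)/\|\nabla v\|_\Omega$ played in the final step of the proof of Proposition~\ref{prop1}.

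For reliability I would pick $v\in\bar U$, invoke the partition of unity $\sum_{a\in\nodes}\psi_a=\1$, and use the identity $r_a(\1)=0$ for $a\in\intnodes$; that identity depends only on $\psi_a\in U_\tria$ and on the Galerkin orthogonality for $u_\tria$, so it carries over unchanged when $H^1_0(\Omega)$ is replaced by $\bar U$. Subtracting the patch mean $c_a=\int_{\omega_a}v/\vol\omega_a$ on each interior star, applying the definition of $\|\cdot\|_{(H^1_*(\omega_a)\cap\bar U)'}$ to each summand, and invoking Cauchy--Schwarz together with the at-most-three-patch overlap reproduces the factor $\sqrt 3$ exactly as in Proposition~\ref{prop1}. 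For efficiency, given $v\in H^1_*(\omega_a)\cap\bar U$, the function $\psi_a v$ (extended by zero) is an admissible test function in the Galerkin equation for $\bar u$, so
$$
r_a(v)=r_\tria(\psi_a v)=\langle\nabla(\bar u-u_\tria),\nabla(\psi_a v)\rangle_{\omega_a}\leq\|\nabla(\bar u-u_\tria)\|_{\omega_a}\,\|\nabla(\psi_a v)\|_{\omega_a},
$$
and the Poincar\'e (interior $a$) or Friedrichs (boundary $a$) inequality $\|\nabla(\psi_a v)\|_{\omega_a}\lesssim\|\nabla v\|_{\omega_a}$ closes the argument with the same constant $C_1$ as before.

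The only delicate point — and really the purpose of intersecting with $\bar U$ in the local dual norm — is the admissibility bookkeeping: the cut-off products $\psi_a v$ and the shifted functions $v-c_a\1$ must themselves be legitimate test functions for the Galerkin equation defining $\bar u$. This is exactly what the intersection with $\bar U$ is meant to encode, and once it is respected the proof of Proposition~\ref{prop1} transfers mechanically to the discrete error $\bar u-u_\tria$ and yields the same constant $C_1$.
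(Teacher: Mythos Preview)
Your proposal is correct and matches the paper's approach exactly: the paper itself gives no separate proof for Proposition~\ref{prop3}, stating only that ``exactly the same proof of Proposition~\ref{prop1} shows the following result.'' You have correctly spelled out the substitution $u\leadsto\bar u$, $H^1_0(\Omega)\leadsto\bar U$ and identified the discrete duality identity $r_{\tria}(v)=\langle\nabla(\bar u-u_{\tria}),\nabla v\rangle_\Omega$ for $v\in\bar U$ as the only new ingredient, which is precisely what the paper's one-line justification relies on.
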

\noindent For completeness, here and on other places, with
$H_\ast^1(\omega_a) \cap \bar{U}$, we mean the space of functions in
$H_\ast^1(\omega_a)$ that are restriction of functions from $\bar{U}$.

Let $Q_{p,T}$ denote the $L_2(T)$-orthogonal projection onto
$\P_p(T)$, and define
\begin{equation}\label{Qtriaa}
(Q_{\tria[a]} w)|_T :=
\left\{\begin{array}{cl@{}} Q_{p_T-1,T} w & p_T \geq 2, \\ \frac{\int_T \psi_a w}{\int_T \psi_a} & p_T=1,\end{array} \right.
\end{equation}
One easily infers that $w \mapsto \frac{\int_T \psi_a w}{\int_T \psi_a}$
is an $L_2(T)$-bounded projector, with norm equal to $\sqrt{\frac{3}{2}}$.
Let $\breve{r}_a$ be the residual computed on discrete data
$Q_{\tria[a]} f$ rather than $f$:
\begin{equation} \label{2}
 \breve{r}_a(v): = \sum_{T \in \tria[a]} \int_T v \psi_a ((Q_{\tria[a]} f)|_T+\triangle u_{\tria})+\sum_{e\in \intedges[a]} \int_e v \psi_a \llbracket \nabla u_{\tria}\cdot\vec{n}_e \rrbracket.
 \end{equation}
Using the definition of $Q_{\tria[a]} f$ and that $\psi_a|_T \in \P_1(T)$, we then have
 $$
  \breve{r}_a(\1)- r_a(\1)=\sum_{T \in \tria[a]} \int_T \psi_a ((Q_{\tria[a]} f)|_T-f)=0 \quad (a \in \nodes).
 $$

 The two next results show that the norms of $r_a$ and $\breve{r}_a$
 are equal modulo local {\em data oscillation}.
 
 \begin{proposition}[discrepancy between $r_a$ and $\breve{r}_a$] \label{prop2} It holds that
 $$\|r_a- \breve{r}_a\|_{H^1_*(\omega_a)'} \lesssim \sum_{T \in \tria[a]} \diam(T) \inf_{f_{p_T-1} \in \P_{p_T-1}(T)} \|f-f_{p_T-1}\|_T,$$
 so that in particular $ \breve{r}_a=r_a$ when $f \in \prod_{T \in \tria} \P_{p_T-1}(T)$.
 \end{proposition}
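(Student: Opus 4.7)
The plan is to show the bound triangle-by-triangle, exploiting the appropriate orthogonality of the projector $Q_{\tria[a]}$ on each $T$. Writing out the definitions \eqref{2} and of $r_a$, the contribution of the jump terms cancels, so that for any $v \in H^1_*(\omega_a)$
\[
(r_a - \breve r_a)(v) \;=\; \sum_{T \in \tria[a]} \int_T v\psi_a \bigl(f - (Q_{\tria[a]} f)|_T\bigr).
\]
Hence it suffices to prove, for each $T \in \tria[a]$ and every $f_{p_T-1}\in\P_{p_T-1}(T)$,
\[
\Bigl|\int_T v\psi_a \bigl(f - (Q_{\tria[a]} f)|_T\bigr)\Bigr| \;\lesssim\; \diam(T)\,\|\nabla v\|_{\omega_a}\,\|f-f_{p_T-1}\|_T,
\]
after which summation and the definition of the dual norm finish the proof.

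For the case $p_T\ge 2$, where $(Q_{\tria[a]})|_T = Q_{p_T-1,T}$ is the $L_2(T)$-orthogonal projection onto $\P_{p_T-1}(T)$, I would use self-adjointness together with $Q_{p_T-1,T} f_{p_T-1}=f_{p_T-1}$ to rewrite
\[
\int_T v\psi_a(f-Q_{p_T-1,T}f) \;=\; \int_T \bigl(v\psi_a - Q_{p_T-1,T}(v\psi_a)\bigr)(f-f_{p_T-1}),
\]
then bound $\|v\psi_a - Q_{p_T-1,T}(v\psi_a)\|_T$ via the best-approximation property by taking as competitor the constant mean of $v\psi_a$. A standard Poincaré inequality on $T$ plus the Leibniz rule and $\|\nabla\psi_a\|_{L_\infty(T)}\lesssim \diam(T)^{-1}$ yield $\|v\psi_a - Q_{p_T-1,T}(v\psi_a)\|_T \lesssim \diam(T)\|\nabla v\|_T + \|v\|_T$. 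The term $\|v\|_T$ is upgraded to the desired form by applying Poincaré (for $a\in\intnodes$, using the mean-value constraint in $H^1_*(\omega_a)$) or Friedrichs (for $a\in\bdrnodes$) on $\omega_a$, which together with shape regularity gives $\|v\|_T \le \|v\|_{\omega_a}\lesssim \diam(T)\|\nabla v\|_{\omega_a}$.

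The case $p_T=1$ is the more delicate one, because $Q_{\tria[a]}$ is not the $L_2$-projection but the $\psi_a$-weighted average $c_T := \int_T \psi_a f/\int_T \psi_a$. The defining identity $\int_T \psi_a(f-c_T)=0$ allows a free constant shift of $v$, so choosing $\bar v = |T|^{-1}\int_T v$ I would write
\[
\int_T v\psi_a(f-c_T) \;=\; \int_T (v-\bar v)\psi_a(f-c_T)
\;\le\; \|v-\bar v\|_T\,\|f-c_T\|_T,
\]
using $|\psi_a|\le 1$. Poincaré on $T$ gives $\|v-\bar v\|_T \lesssim \diam(T)\|\nabla v\|_T$, and the $L_2(T)$-boundedness of the weighted projector (noted after \eqref{Qtriaa}) gives $\|f-c_T\|_T \lesssim \|f-f_0\|_T$ for every $f_0\in\P_0(T)$. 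This is the main obstacle because the weighted-mean structure means the usual $L_2$-orthogonality is unavailable and one must cleanly separate the roles of $\psi_a$ (as a weight ensuring cancellation against $c_T$) and of the constant shift $\bar v$ (producing the $\diam(T)$ factor).

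Combining the two cases and summing over $T\in\tria[a]$ yields $|(r_a-\breve r_a)(v)| \lesssim \|\nabla v\|_{\omega_a} \sum_{T\in\tria[a]} \diam(T)\inf_{f_{p_T-1}\in\P_{p_T-1}(T)} \|f-f_{p_T-1}\|_T$; dividing by $\|\nabla v\|_{\omega_a}$ and taking the supremum gives the claim. The in-particular assertion is immediate: if $f|_T\in\P_{p_T-1}(T)$ for every $T\in\tria[a]$, choosing $f_{p_T-1}=f|_T$ annihilates the right-hand side, so $r_a=\breve r_a$.
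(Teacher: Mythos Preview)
Your argument is correct, but it is considerably more elaborate than what is needed. The paper's proof avoids any case distinction and proceeds in one stroke: for $v\in H^1_*(\omega_a)$ apply Cauchy--Schwarz on each $T$ and use $|\psi_a|\le 1$ to get
\[
(r_a-\breve r_a)(v)\;\le\;\sum_{T\in\tria[a]}\|v\|_T\,\|f-(Q_{\tria[a]}f)|_T\|_T
\;\le\;\|v\|_{\omega_a}\Bigl(\sum_{T\in\tria[a]}\|f-(Q_{\tria[a]}f)|_T\|_T^2\Bigr)^{1/2},
\]
then invoke Poincar\'e/Friedrichs once on $\omega_a$ to bound $\|v\|_{\omega_a}\lesssim\diam(\omega_a)\|\nabla v\|_{\omega_a}$, and finally use shape regularity together with the $L_2(T)$-boundedness of $Q_{\tria[a]}$ (stated immediately after \eqref{Qtriaa}, and valid in \emph{both} cases $p_T\ge 2$ and $p_T=1$) to obtain $\|f-(Q_{\tria[a]}f)|_T\|_T\lesssim\inf_{f_{p_T-1}\in\P_{p_T-1}(T)}\|f-f_{p_T-1}\|_T$.

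What you do differently is to exploit additional structure of $Q_{\tria[a]}$ on each $T$: self-adjointness of $Q_{p_T-1,T}$ when $p_T\ge 2$, and the $\psi_a$-weighted mean-zero identity when $p_T=1$, in order to shift part of the approximation onto $v\psi_a$ (or $v$). This is all sound, but it buys nothing here: you still end up needing Poincar\'e/Friedrichs on $\omega_a$ to control the residual $\|v\|_T$ term in the $p_T\ge 2$ branch, and the $L_2$-boundedness of the weighted projector in the $p_T=1$ branch---exactly the two ingredients that suffice for the paper's direct argument. In short, your route is a valid detour; the paper's is the straight line.
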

 
 \begin{proof}
 By applying the Poincar\'{e} inequality for $a \in \intnodes$ or the Friedrichs inequality for $a \in \bdrnodes$, we infer that
 \begin{align*}
\|r_a- \breve{r}_a\|_{H^1_*(\omega_a)'}
&=
\sup_{0 \neq v \in H^1_*(\omega_a)} \frac{\sum_{T \in \tria[a]} \int_T v \psi_a (f-(Q_{\tria[a]} f)|_T)}{\|\nabla v\|_{\omega_a}}\\
& \lesssim \diam(\omega_a)\sqrt{ \sum_{T \in \tria[a]} \|f-(Q_{\tria[a]} f)|_T\|_T^2}\\
&\lesssim \sum_{T \in \tria[a]} \diam(T) \inf_{f_{p_T-1} \in \P_{p_T-1}(T)} \|f-f_{p_T-1}\|_T. \qedhere
\end{align*}
 \end{proof}
 
Straightforward applications of Cauchy-Schwarz inequalities give the
following bound in terms of global data oscillation, which is
defined as follows:
\[
\osc(f,\tria) = \sqrt{\sum_{T \in \tria}\diam(T)^2 \inf_{f_{p_T-1} \in
  \P_{p_T-1}(T)} \|f-f_{p_T-1}\|_T^2 }.
\]
 
 \begin{corollary}[global discrepancy between residuals] \label{corol1}There exists a constant $C_2>0$ such that
 \begin{align*}
  \left| \sqrt{\sum_{a \in \nodes}
    \|\breve{r}_a\|_{H^1_*(\omega_a)'}^2} - \sqrt{\sum_{a \in \nodes}
    \|r_a\|_{H^1_*(\omega_a)'}^2} \right| \leq  C_2 \osc(f,\tria).
 \end{align*}
 \end{corollary}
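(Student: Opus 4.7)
The plan is to combine the reverse triangle inequality for the $\ell^2$-norm over vertices with the pointwise bound from Proposition~\ref{prop2}, and then sum using finite overlap of stars.

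First I would view $(\|r_a\|_{H^1_*(\omega_a)'})_{a \in \nodes}$ and $(\|\breve r_a\|_{H^1_*(\omega_a)'})_{a \in \nodes}$ as vectors in $\ell^2(\nodes)$. The reverse triangle inequality then yields
\begin{equation*}
\left| \sqrt{\sum_{a \in \nodes} \|\breve{r}_a\|_{H^1_*(\omega_a)'}^2} - \sqrt{\sum_{a \in \nodes} \|r_a\|_{H^1_*(\omega_a)'}^2} \right| \leq \sqrt{\sum_{a \in \nodes} \|r_a - \breve{r}_a\|_{H^1_*(\omega_a)'}^2}.
\end{equation*}
So it suffices to bound the right-hand side by $C_2 \osc(f,\tria)$.

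Next I would insert the local bound from Proposition~\ref{prop2}. Since the number of triangles sharing the vertex $a$ is bounded by a constant depending only on shape regularity, a Cauchy--Schwarz inequality in the sum over $T \in \tria[a]$ gives
\begin{equation*}
\|r_a - \breve{r}_a\|_{H^1_*(\omega_a)'}^2 \lesssim \sum_{T \in \tria[a]} \diam(T)^2 \inf_{f_{p_T-1} \in \P_{p_T-1}(T)} \|f - f_{p_T-1}\|_T^2.
\end{equation*}
Summing over $a \in \nodes$ and swapping the order of summation, each triangle $T$ contributes to the stars of its three vertices, so the double sum is at most $3 \osc(f,\tria)^2$. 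Taking square roots and combining with the reverse triangle inequality yields the claim with $C_2 \lesssim \sqrt{3}$ times the hidden constant in Proposition~\ref{prop2}.

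There is no real obstacle here; the only minor point to be careful about is the combinatorial factor coming from the finite overlap of stars, which is the same factor of $3$ already exploited in Proposition~\ref{prop1}. Everything else is a direct chain of Proposition~\ref{prop2}, Cauchy--Schwarz, and the reverse triangle inequality.
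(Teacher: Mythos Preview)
Your proof is correct and matches the paper's intended argument: the paper itself omits all details, stating only that ``straightforward applications of Cauchy--Schwarz inequalities'' give the result, and your write-up is exactly such a straightforward application (reverse triangle inequality in $\ell^2(\nodes)$ and in each $H^1_*(\omega_a)'$, then Proposition~\ref{prop2} squared via Cauchy--Schwarz, then the factor-$3$ finite overlap).
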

 
 \begin{remark}[weaker dual norms] \label{rem1} Obviously the result of Proposition~\ref{prop2}, and thus that of Corollary~\ref{corol1}, is also valid when the dual norms are taken w.r.t. closed subspaces of the spaces $H^1_*(\omega_a)$, as those employed in Proposition~\ref{prop3}.
\end{remark}

\begin{remark}[minimal requirement on $u_{\tria}$] \label{rem2}
The results that were obtained in this section, and so those that are based on them as the forthcoming Proposition~\ref{Rconv}, are actually valid for {\em any} $u_{\tria} \in U_{\tria}$
that satisfies
\begin{equation} \label{linears}
\int_\Omega \nabla u_{\tria} \cdot \nabla v \,dx=\int_\Omega f v\,dx \quad (v \in H^1_0(\Omega) \cap \prod_{T \in \tria} \P_1(T)),
\end{equation}
being the property responsible for $r_a(\1)=0$ ($a \in \intnodes$).
So $u_{\tria}$ does not have to be the Galerkin solution from $U_{\tria}$.
\end{remark}

\section{$p$-robust convergence of $hp$-AFEM} \label{SRconvergence}

Let $\theta \in (0,1]$, $\sigma \in (0,1]$, $\lambda \in (0,\frac{\sigma \theta}{C_2})$
be constants.
We consider an abstract $hp$-AFEM which comprises the following
three steps between consecutive Galerkin solves:
\begin{enumerate} \renewcommand\theenumi{\roman{enumi}}
\item \label{y1}
{\it Small data oscillation}:
let global data oscillation be sufficiently small relative to the global estimator
\[
\osc(f,\tria) \leq \lambda \sqrt{\sum_{a \in \nodes}
  \|\breve{r}_a\|_{H^1_*(\omega_a)'}^2};
\]
\item \label{y2}
{\it D\"orfler marking}:
let the marked set ${\mathcal M}\subset \nodes$ satisfy
\[
\sum_{a \in {\mathcal M}} \|\breve{r}_a\|^2_{H^1_*(\omega_a)'} \geq
\theta^2 \sum_{a \in \nodes} \|\breve{r}_a\|^2_{H^1_*(\omega_a)'};
\]
\item \label{y3}
{\it Local saturation property}: let
$\bar{U} \supset U_{\tria}$ be a closed subspace of
$H^1_0(\Omega)$ that saturates the dual norm $\|\breve{r}_a\|_{H^1_*(\omega_a)'}$
for each marked star $\omega_a$
\[
\|\breve{r}_a\|_{(H^1_*(\omega_a)\cap
  \bar{U})'} \geq \sigma \|\breve{r}_a\|_{H^1_*(\omega_a)'} \quad (a \in {\mathcal M})
\]
\end{enumerate}
Condition \eqref{y3} means that enlarging the discrete space suitably
ensures local saturation on the marked stars.
This abstract $hp$-AFEM is driven by the a posteriori error indicators $\|\breve{r}_a\|_{H^1_*(\omega_a)'}$, that, however, are not computable.
In the next section, we will recalled that these indicators are {\em
uniformly} equivalent to computable quantities, which can then be
used instead.
With some obvious modifications of the constants in the error reduction
factor, the following result remains valid.

\begin{proposition}[contraction of $hp$-AFEM] \label{Rconv}
Let conditions \eqref{y1}-\eqref{y3} be enforced by the
$hp$-AFEM, and let $\bar{u} \in \bar{U}$ denote the Galerkin
  solution. Then it holds that
$$
\|\nabla(u-\bar{u})\|_\Omega \leq \sqrt{1-\frac{(\sigma \theta-C_2 \lambda)^2}{(3 C_1(1+C_2 \lambda))^2}} \,\,\|\nabla(u-u_{\tria})\|_\Omega.
$$
\end{proposition}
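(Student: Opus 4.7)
The plan is to reduce, via Galerkin orthogonality, to a lower bound on $\|\nabla(\bar u - u_{\tria})\|_\Omega$ and then to chain the three abstract assumptions (\ref{y1})--(\ref{y3}) through the residual dual norms of Sect.~\ref{Slocalizedresiduals}. Since $U_{\tria} \subset \bar U$ and both $u_{\tria}$, $\bar u$ are Galerkin, Pythagoras gives
\[
\|\nabla(u-\bar u)\|_\Omega^2 = \|\nabla(u-u_{\tria})\|_\Omega^2 - \|\nabla(\bar u - u_{\tria})\|_\Omega^2,
\]
so it suffices to prove $\|\nabla(\bar u - u_{\tria})\|_\Omega \geq c\,\|\nabla(u-u_{\tria})\|_\Omega$ with $c = (\sigma\theta - C_2\lambda)/(3 C_1(1+C_2\lambda))$; this exponent is positive by the standing hypothesis $\lambda < \sigma\theta/C_2$.

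For such a bound I would start from the discrete efficiency of Proposition~\ref{prop3} applied on $\bar U$. Summing over the marked set and using that each triangle lies in at most three patches gives $3 C_1^2\|\nabla(\bar u - u_{\tria})\|_\Omega^2 \geq \sum_{a \in \mathcal{M}}\|r_a\|_{(H^1_*(\omega_a)\cap \bar U)'}^2$. I would then swap $r_a$ for $\breve r_a$ inside the constrained dual norm: by Remark~\ref{rem1}, Proposition~\ref{prop2} still applies on $H^1_*(\omega_a)\cap \bar U$, so an $\ell_2$ triangle inequality over $\mathcal{M}$ followed by the Minkowski rearrangement used in the proof of Corollary~\ref{corol1} yields
\[
\sqrt{\sum_{a \in \mathcal{M}}\|r_a\|_{(H^1_*(\omega_a)\cap \bar U)'}^2} \geq \sqrt{\sum_{a \in \mathcal{M}}\|\breve r_a\|_{(H^1_*(\omega_a)\cap \bar U)'}^2} - C_2\,\osc(f,\tria).
\]
The local saturation assumption (\ref{y3}) then replaces each $\|\breve r_a\|_{(H^1_*(\omega_a)\cap \bar U)'}$ by $\sigma\,\|\breve r_a\|_{H^1_*(\omega_a)'}$ on the marked stars, and D\"orfler marking (\ref{y2}) converts the sum over $\mathcal{M}$ into $\theta$ times its counterpart over all of $\nodes$.

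The final step is to return to the true energy error. The oscillation smallness (\ref{y1}) absorbs the $C_2\osc(f,\tria)$ defect into the global estimator, producing the factor $\sigma\theta - C_2\lambda$, while a second use of Corollary~\ref{corol1} combined with (\ref{y1}) gives $\sqrt{\sum_a \|\breve r_a\|^2_{H^1_*(\omega_a)'}} \geq (1+C_2\lambda)^{-1}\sqrt{\sum_a \|r_a\|^2_{H^1_*(\omega_a)'}}$, producing the denominator factor $1+C_2\lambda$. Global reliability from Proposition~\ref{prop1} then delivers $\sqrt{\sum_a \|r_a\|^2_{H^1_*(\omega_a)'}} \geq \tfrac{1}{\sqrt 3}\|\nabla(u-u_{\tria})\|_\Omega$, and chaining the inequalities yields $c$ as claimed; Pythagoras gives the stated factor $\sqrt{1-c^2}$.

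The step I expect to require the most care is the passage from $r_a$ to $\breve r_a$ inside the restricted dual norm, because two perturbations (the data replacement and the norm restriction to $\bar U$) have to be handled simultaneously on the marked stars and then controlled globally by $\osc(f,\tria)$. This is precisely where the hypothesis $\lambda < \sigma\theta/C_2$ is consumed: it ensures that the oscillation defect can be absorbed into the global estimator so that the contraction exponent remains positive.
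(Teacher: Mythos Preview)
Your proposal is correct and follows essentially the same route as the paper's proof: the paper also chains Proposition~\ref{prop3} (discrete efficiency), Corollary~\ref{corol1}/Remark~\ref{rem1} (swap $r_a\leftrightarrow\breve r_a$), conditions \eqref{y1}--\eqref{y3}, and Proposition~\ref{prop1} (reliability), and finishes with Galerkin orthogonality. The only cosmetic difference is that the paper sums the efficiency bound over all $a\in\nodes$ before restricting to $\mathcal M$, whereas you sum directly over $\mathcal M$; both are fine since the finite-overlap factor~$3$ and the oscillation bound with constant $C_2$ are unaffected by passing to a subset of vertices.
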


\begin{proof} We observe that the following chain of inequalities is valid
\begin{align*}
\sqrt{3} C_1 \|\nabla(\bar{u}-u_{\tria})\|_\Omega \quad&\leftstackrel{\text{Prop.~\ref{prop3}}}{\geq} \sqrt{\sum_{a \in \nodes} \|r_a\|_{(H^1_*(\omega_a)\cap \bar{U})'}^2} \\
&\leftstackrel{\text{Corol.~\ref{corol1}, Rem.~\ref{rem1}}}{\geq} \sqrt{\sum_{a \in \nodes} \|\breve{r}_a\|_{(H^1_*(\omega_a)\cap \bar{U})'}^2} -C_2\osc(f,\tria)\\
&\leftstackrel{\text{\eqref{y1}}}{\geq}  \sqrt{\sum_{a \in {\mathcal M}} \|\breve{r}_a\|_{(H^1_*(\omega_a)\cap \bar{U})'}^2} -C_2 \lambda \sqrt{\sum_{a \in \nodes} \|\breve{r}_a\|_{H^1_*(\omega_a)'}^2}\\
&\leftstackrel{\eqref{y3}, \eqref{y2}}{\geq}  (\sigma \theta-C_2\lambda) \sqrt{\sum_{a \in \nodes} \|\breve{r}_a\|_{H^1_*(\omega_a)'}^2}\\
&\leftstackrel{\text{Corol.~\ref{corol1}, \eqref{y1}}}{\geq} \frac{\sigma \theta-C_2\lambda}{1+C_2\lambda} \sqrt{\sum_{a \in \nodes} \|r_a\|_{H^1_*(\omega_a)'}^2}\\
&\leftstackrel{\text{Prop.~\ref{prop1}}}{\geq} \frac{\sigma \theta-C_2\lambda}{\sqrt{3}(1+C_2\lambda)} \|\nabla(u-u_{\tria})\|_\Omega.
\end{align*}
Exploiting Galerkin orthogonality
$\nabla (u-\bar{u}) \perp \nabla(\bar{u}-u_{\tria})$ finishes the proof.
\end{proof}

Proposition \ref{Rconv} is reminiscent of \cite[Theorem 1.1]{DN02}
for piecewise linear and quadratic finite elements, except that 
\eqref{y1} was expressed in terms of the error; an expression similar
to \eqref{y1} is discussed in \cite[Remark 3.4]{DN02}.
Condition \eqref{y3} was derived in \cite{DN02} upon explicitly
computing a sharp relation of jump residuals against linear and
quadratic bubbles that allows for elimination of jumps in favor of
interior residuals. A similar calculation seems intractable for general
polynomial degree.
In order to enforce \eqref{y3} for any polynomial degree, we will
seek later, in Sections \ref{Ssaturation} and \ref{Sconstants},
a function $q:\N \rightarrow \N$ such that for some constant $\sigma \in (0,1]$, 
\begin{equation} \label{saturation_task}
\|\breve{r}_a\|_{(H^1_*(\omega_a)\cap \prod_{T \in \tria[a]} \P_{p_a+q(p_a)}(T))'} \geq \sigma \|\breve{r}_a\|_{H^1_*(\omega_a)'} \quad(a \in \nodes);
\end{equation}
this is a local version of \eqref{saturation} written in terms of
residuals and thereby more practical.
Upon selecting $H_0^1(\Omega) \supset \bar{U} \supset U_{\tria}$  such that 
\begin{equation}\label{def-Ubar}
H^1_*(\omega_a)\cap \prod_{T \in \tria[a]} \P_{p_a+q(p_a)}(T) \subset H^1_*(\omega_a) \cap \bar{U}\quad (a \in {\mathcal M}),
\end{equation}
we then infer that the saturation property \eqref{y3} is valid.

\begin{remark}[role of oscillation]
As discussed in Sect.~\ref{Sintroduction}, in the setting of the $hp$-AFEM algorithm from \cite{35.99}, we need the result of Proposition~\ref{Rconv} only for the case $\osc(f,\tria)=0$.
Indeed, there the actual right-hand side has already been replaced by a piecewise polynomial approximation before moving to the error reduction step.

Since the term $\osc(f,\tria)$ is generically of higher order than $\|\nabla
(u-u_{\tria})\|_\Omega$, usually \eqref{y1} is satisfied ``automatically''
also inside other $hp$-AFEM algorithms.
In the unlikely event that initially this does not hold, it can be
enforced by global, or appropriate local $p$-enrichment that drive
$\osc(f,\tria)$ to zero even though the
right-hand side of \eqref{y1} changes with $(p_T)_{T \in \tria}$.
To see this we stress that {\em without} computing new Galerkin solutions w.r.t. to
the enlarged trial spaces (which is allowed by Remark~\ref{rem2}),
Corollary~\ref{corol1} shows that
\[
\sqrt{\sum_{a \in \nodes} \|\breve{r}_a\|_{H^1_*(\omega_a)'}^2}
\rightarrow \sqrt{\sum_{a \in \nodes} \|r_a\|_{H^1_*(\omega_a)'}^2}
\quad\text{ as } \osc(f,\tria) \downarrow 0.
\]
Since Proposition~\ref{prop1} implies that
the latter expression is equivalent to $\|\nabla(u-u_{\tria})\|$,
which is thus not affected by these additional $p$-enrichments,
we infer that \eqref{y1} is satisfied when $\osc(f,\tria)$ has been
made sufficiently small.
\end{remark}

\begin{remark} [optimality]
For $h$-AFEM, i.e. fixed polynomial degree on all triangles, it is known that R-linear convergence 
already `nearly' implies a best possible convergence rate allowed by this degree and the solution, i.e. `optimality'.
Indeed, what is furthermore needed is that the error estimator is `efficient' and `discretely reliable',
and that the cardinality of any partition created by the AFEM can be bounded on a constant multiple of the total number of marked cells
starting from the initial partition. 

In $hp$-AFEM, the question how to ensure optimal rates is much more difficult. 
At a first glance, it requires a basically optimal choice of either
$h$-refinement or $p$-enrichment in every marked cell,
which seems about impossible to realize.
In \cite{35.99}, we therefore returned to the idea, introduced in \cite{21} in the $h$-AFEM setting, of correcting possibly non-optimal earlier decisions by means of coarsening.
We showed how any R-linearly convergent $hp$-AFEM can be turned into an optimally converging $hp$-AFEM by the addition of an $hp$-coarsening routine that was developed in \cite{22.556}.
This routine, that is called after every fixed number of the R-linearly converging AFEM, replaces the current AFEM solution by 
 a quasi-optimal $hp$-approximation within a suitable tolerance. 
\end{remark}

\begin{remark}[computational cost]
In \cite{35.99}, we examined the Poisson problem in two space
dimensions and derived an error reduction property upon combining
D\"{o}rfler marking with an $h$-refinement of the marked elements
driven by the a posteriori error estimators of \cite{MW01}.
We showed that allowing a number of iterations that grow polynomially
faster than quadratically with the maximal polynomial degree
is sufficient for an error reduction with a fixed factor.
However, as already discussed in Sect. \ref{Sintroduction}, this yields
a computational cost that might increase exponentially
with the polynomial degree and is thus unacceptable in practice.

In this paper, we resort to $p$-enrichment instead and investigate the 
question \eqref{saturation}, or equivalently the amount $q(p)$ by which
the local polynomial degree $p$ must be increased for {\em one} single
iteration of D\"{o}rfler marking together with $p$-enrichment of the marked patches 
to yield an error reduction by a fixed factor.
This key question is discussed in the next three sections.
\end{remark}

\section{Equivalent computable a posteriori error indicators}  \label{Sequivestimators}
In this section we recall that the dual norm of the local residuals $\breve{r}$ are equivalent to computable quantities.
These quantities can be used to drive the $hp$-AFEM.

It is well-known that a vector field  $\vec{\tau} \in \RT_p(T)$ in the
Raviart-Thomas space $\RT_p(T)$ of order $p$ over an element $T$
is uniquely determined by the conditions $\divv\vec{\tau}=\phi_T$, and $\vec{\tau}|_e\cdot \vec{n}_{T}= \phi_e$ ($e \in \edges \cap \partial T$), when 
the $\phi_T, \phi_e$ are polynomials of degree $p$ that satisfy $\int_T \phi_T=\sum_{e \in \edges \cap \partial T} \int_e \phi_e$. 
Noting that for 
\begin{equation} \label{broken}
 \vec{\sigma}_a \in \RT_{\vec{p}_a,0}^{-1}(\tria[a]):= \Big\{\vec{\sigma} \in \prod_{T \in \tria[a]} \RT_{p_T}(T)\colon \vec{\sigma}\cdot \vec{n}_e=0 \,\,(e \in \bdredges[a])\Big\}
 \end{equation}
 and $v \in H^1(\omega_a)$ one has
 $$
 - \langle \vec{\sigma}_a,\nabla v\rangle_{\omega_a} = \sum_{T \in \tria[a]} \int_T v \divv \vec{\sigma}_a+\sum_{e \in \intedges[a] }\int_{e} v \llbracket \vec{\sigma}_a \cdot \vec{n}_e\rrbracket,
 $$
in view of \eqref{2} and $\breve{r}_a(\1)=0$ ($a \in \intnodes$) one
infers that there exist (multiple) $\vec{\sigma}_a \in \RT_{\vec{p}_a,0}^{-1}(\tria[a])$ with 
\begin{equation} \label{lift}
- \langle \vec{\sigma}_a,\nabla v\rangle_{\omega_a} =\breve{r}_a(v) \quad(v \in H_*^1(\omega_a)).
\end{equation}
In the literature, such a field $\vec{\sigma}_a$ is called {\em in equilibration} with $\breve{r}_a$.
 Obviously, for {\em any} of such $\vec{\sigma}_a$ it holds that 
 \begin{equation} \label{reliable}
 \|\breve{r}_a\|_{H^1_*(\omega_a)'} \leq \|\vec{\sigma}_a\|_{\omega_a}.
 \end{equation}
 
 The next, celebrated result shows that, up to a multiplicative constant,
 the reversed inequality is true for a suitable $\vec{\sigma}_a$
 \cite[Thm.~7]{33}.
 
 \begin{theorem}[equivalent estimator]
Let $\RT^{-1}_{p_a,0}(\tria[a])$ denote the non-conforming
Raviart-Thomas space over the star $\omega_a$ of degree $p_a$
 $$
\RT^{-1}_{p_a,0}(\tria[a]):= \Big\{\vec{\sigma} \in \prod_{T \in
  \tria[a]} \RT_{p_a,0}(T)\colon \vec{\sigma}\cdot \vec{n}_e=0 \,\,(e
\in \bdredges[a])\Big\} .
 $$
Then there holds
 $$
 \argmin_{\{\vec{\sigma}_a \in \RT_{p_a,0}^{-1}(\tria[a])\colon  \vec{\sigma}_a \text{ solves } \eqref{lift}\}} \|\vec{\sigma}_a\|_{\omega_a} \eqsim \|\breve{r}_a\|_{H^1_*(\omega_a)'}.
 $$
 \end{theorem}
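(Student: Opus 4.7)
The $\gtrsim$ bound $\|\breve{r}_a\|_{H^1_*(\omega_a)'} \leq \|\vec{\sigma}_a\|_{\omega_a}$ for any admissible $\vec{\sigma}_a$ is already recorded in \eqref{reliable}, so the substance of the theorem is the reverse bound $\min\|\vec{\sigma}_a\|_{\omega_a} \lesssim \|\breve{r}_a\|_{H^1_*(\omega_a)'}$ with a constant independent of $p_a$ and of the shape-regular mesh. The plan is to pass through a continuous minimum-energy flux and then transfer to the Raviart-Thomas space with a $p$-stable construction.

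First, let $\vec{\sigma}^{\mathrm{cts}}_a$ be the minimizer of $\|\vec{\tau}\|_{\omega_a}$ over all $\vec{\tau} \in H(\divv;\omega_a)$ having the appropriate vanishing normal trace on (the relevant part of) $\partial\omega_a$ and in equilibration with $\breve{r}_a$, in the sense that $-\langle\vec{\tau},\nabla v\rangle_{\omega_a} = \breve{r}_a(v)$ for every $v \in H^1_*(\omega_a)$. A complementary-energy duality (Prager--Synge) identifies $\vec{\sigma}^{\mathrm{cts}}_a = -\nabla\phi_a$, where $\phi_a \in H^1_*(\omega_a)$ solves the local Poisson problem $\langle\nabla\phi_a,\nabla v\rangle_{\omega_a} = \breve{r}_a(v)$, and yields the identity $\|\vec{\sigma}^{\mathrm{cts}}_a\|_{\omega_a} = \|\breve{r}_a\|_{H^1_*(\omega_a)'}$. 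Since $\breve{r}_a$ carries piecewise polynomial data of degree $\leq p_a$, read off from \eqref{2}, the equilibration constraint is compatible with the discrete space $\RT^{-1}_{p_a,0}(\tria[a])$, so that some admissible discrete equilibrant exists at all.

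The second, decisive step is to produce a discrete equilibrant $\vec{\sigma}_a \in \RT^{-1}_{p_a,0}(\tria[a])$ with $\|\vec{\sigma}_a\|_{\omega_a} \lesssim \|\vec{\sigma}^{\mathrm{cts}}_a\|_{\omega_a}$ uniformly in $p_a$. The natural candidate is the minimum-norm discrete equilibrant, characterized by a discrete mixed saddle-point problem on $\omega_a$; what one needs is that the associated discrete inf-sup constant is $p$-robust. This is the heart of the Braess--Pillwein--Sch\"oberl result. A constructive route is: start from any easy-to-write discrete equilibrant (for instance an elementwise partition-of-unity/flux-balancing construction on the star), form the divergence-constraint defect against $\vec{\sigma}^{\mathrm{cts}}_a$, and lift this defect back into $\RT^{-1}_{p_a,0}(\tria[a])$ using a $p$-robust right-inverse of divergence on each triangle of $\tria[a]$.

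The main obstacle I expect is precisely this right inverse: one needs an operator that, on the reference triangle $\hat T$, maps prescribed polynomial divergence and normal trace data of degree $\leq p_a$ into $\RT_{p_a}(\hat T)$ with $L_2$-norm bounded independently of $p_a$. In two dimensions this is supplied by the stable polynomial extension operators of Demkowicz--Gopalakrishnan--Sch\"oberl, combined with a $p$-robust discrete Poincar\'e/Friedrichs-type inequality for piecewise polynomials on the star that connects the elementwise pieces across interior edges. Once these ingredients are in place, star-wise assembly yields $\|\vec{\sigma}_a\|_{\omega_a} \lesssim \|\vec{\sigma}^{\mathrm{cts}}_a\|_{\omega_a} = \|\breve{r}_a\|_{H^1_*(\omega_a)'}$, and together with \eqref{reliable} this proves the claimed equivalence.
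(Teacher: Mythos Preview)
Your framing matches the paper: the paper does not prove this theorem itself but records it as \cite[Thm.~7]{33}, noting that one inequality is \eqref{reliable} and that the other is the Braess--Pillwein--Sch\"oberl construction of a specific $\vec{\sigma}_a$ with $\|\vec{\sigma}_a\|_{\omega_a}\lesssim\|\breve r_a\|_{H^1_*(\omega_a)'}$. You correctly identify both directions and that the substance lies in \cite{33}.

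Where you add detail beyond the paper --- sketching the BPS argument via a continuous minimum-energy flux --- the route has a gap. The identity $\|\vec{\sigma}^{\mathrm{cts}}_a\|_{\omega_a}=\|\breve r_a\|_{H^1_*(\omega_a)'}$ for $\vec{\sigma}^{\mathrm{cts}}_a=-\nabla\phi_a$ is fine, but it does not help: passing from this continuous flux to a discrete one with $p$-robust norm control is the entire difficulty, and your ``defect lifting'' step is circular. If $\vec{\sigma}_a^0$ is any discrete equilibrant and $\vec{\sigma}^{\mathrm{cts}}_a$ the continuous one, then both satisfy \eqref{lift}, so there is no ``divergence-constraint defect'' between them; their difference is merely gradient-orthogonal, which gives no control on $\|\vec{\sigma}_a^0\|$. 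You end up needing precisely the $p$-robust discrete inf--sup bound you set out to prove. The actual BPS construction does not pass through a continuous minimizer: it builds $\vec{\sigma}_a$ element by element on the star, first absorbing the bulk residual $\psi_a\phi_T$ on each $T$ via a $p$-robust right inverse of the divergence (Lemma~\ref{internal} here), then sweeping across the interior edges and correcting normal traces one edge at a time with a $p$-robust divergence-free Raviart--Thomas extension (Lemma~\ref{boundary}), bounding each correction by the dual norm of the corresponding piece of $\breve r_a$. That machinery is exactly what the proof of Theorem~\ref{thm2} in this paper reuses, so you can see the mechanics spelled out there.
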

 
\noindent Indeed, in \cite{33} a $\vec{\sigma}_a \in \RT_{p_a,0}^{-1}(\tria[a])$ has been constructed that satisfies \eqref{lift} with $\|\vec{\sigma}_a\|_{\omega_a} \lesssim \|\breve{r}_a\|_{H^1_*(\omega_a)'}$. Together with \eqref{reliable} this proves the theorem. 
Note that, according to \eqref{pa}, we have
\[
\RT_{\vec{p}_a,0}^{-1}(\tria[a]) \subset \RT^{-1}_{p_a,0}(\tria[a]).
\]

 An efficient computation of the $\vec{\sigma}_a \in \RT_{p_a,0}^{-1}(\tria[a])$ that solves the minimization problem was proposed in \cite{70.8}:
  Using that $u_{\tria} \in U_{\tria}$, integration-by-parts and the chain rule show that
  \begin{align*}
    \breve{r}_a(v)&=\langle \psi_a Q_{\tria[a]} f,v\rangle_{\omega_a}
    - \langle \nabla u_{\tria},\nabla (\psi_a v)\rangle_{\omega_a} \\
 &= \langle \psi_a Q_{\tria[a]} f -\nabla \psi_a \cdot \nabla u_{\tria},v\rangle_{\omega_a}-\langle \psi_a \nabla u_{\tria},\nabla v\rangle_{\omega_a}.
 \end{align*}
Noting that $\psi_a \nabla u_{\tria}\in {\RT}_{p_a,0}^{-1}(\tria[a])$, and
introducing $\vec{\zeta}_a:=\vec{\sigma}_a-\psi_a \nabla u_{\tria}$, we conclude that $\vec{\sigma}_a \in {\RT}_{p_a,0}^{-1}(\tria[a])$ solves  \eqref{lift} if and only if 
$\vec{\zeta}_a \in {\RT}_{p_a,0}^{-1}(\tria[a])$ solves
 $$
 -\langle \vec{\zeta}_a,\nabla v\rangle_{\omega_a}=\langle \psi_a Q_{\tria[a]} f -\nabla \psi_a \cdot \nabla u_{\tria},v\rangle_{\omega_a}\quad(v \in H_*^1(\omega_a)).
 $$
 Since in particular  $\vec{\zeta}_a\cdot \vec{n}_e=0$ ($e \in \bdredges[a]$), and $\psi_a Q_{\tria[a]} f -\nabla \psi_a \cdot \nabla u_{\tria} \in L_2(\omega_a)$, the latter problem is equivalent to $\divv \vec{\zeta}_a=\psi_a Q_{\tria[a]} f -\nabla \psi_a \cdot \nabla u_{\tria}$, which implies that each solution satisfies
 $$
 \vec{\zeta}_a \in  \RT_{p_a,0}(\tria[a]):= H(\divv;\omega_a) \cap \RT_{p_a,0}^{-1}(\tria[a]).
 $$
 
 The problem of finding $\vec{\zeta}_a \in  \RT_{p_a,0}(\tria[a])$
 with $\divv \vec{\zeta}_a = \psi_a Q_{\tria[a]} f-\nabla \psi_a \cdot
 \nabla u_{\tria}$ and {\em minimal} $\|\vec{\zeta}_a+\psi_a \nabla
 u_{\tria}\|_{\omega_a}$ (i.e., minimal
 $\|\vec{\sigma}_a\|_{\omega_a}$)
 reduces to the following saddle point problem: find
 the pair $\vec{\zeta}_a \in  \RT_{p_a,0}(\tria[a])$, and
 $$
 r_a \in {\mathcal Q}_{p_a}(\tria[a]):=\left\{\begin{array}{cc} 
 \{q \in \prod_{T \in \tria[a]}\P_{p_a}(T) \colon \langle q,\1\rangle_{\omega_a}=0\} & a \in \intnodes, \\
\prod_{T \in \tria[a]}\P_{p_a}(T) & a \in \bdrnodes,
\end{array}
\right.
$$
such that
\begin{equation} \label{mixed}
\left\{
\begin{aligned}
\langle \vec{\zeta}_a,\vec{\tau}_a\rangle_{\omega_a}+\langle \divv \vec{\tau}_a,r_a\rangle_{\omega_a}&= -\langle \psi_a \nabla u_{\tria},\vec{\tau}_a\rangle_{\omega_a} & \hspace*{-1em} (\vec{\tau}_a \in \RT_{p_a,0}(\tria[a])),\\
\langle \divv \vec{\zeta}_a,q_a\rangle_{\omega_a}&=\langle \psi_a Q_{\tria[a]} f-\nabla \psi_a \cdot \nabla u_{\tria},q_a\rangle_{\omega_a} & \hspace*{-1em}(q_a\in {\mathcal Q}_{p_a}(\tria[a])).
\end{aligned}
\right.
\end{equation}

\begin{remark}[avoiding $Q_{\tria[a]}$]
The computation of the projection involving $Q_{\tria[a]}$ can be avoided by a slightly different definition
of $\breve{r}_a$ in \eqref{2}: If, for each $T \in \tria[a]$, we replace the term $\psi_a (Q_{\tria[a]} f)|_T$ by $Q_{p_a,T}(\psi_a  f)$, then all statements obtained so far remain valid, but in \eqref{mixed} the term $\psi_a Q_{\tria[a]} f$ would read as $\hat{Q}_{\tria[a]} (\psi_a f)$, with 
$(\hat{Q}_{\tria[a]} w)|_T:=Q_{p_a,T} w$; recall that
$Q_{p_a,T}$ is defined in \eqref{Qtriaa}.
Since $\ran (I-\hat{Q}_{\tria[a]}) \perp {\mathcal Q}_{\P_a}(\tria[a])$, we infer that in 
that case  $\hat{Q}_{\tria[a]} \psi_a f$ could simply be
replaced by $\psi_a f$.

The reason why we have nevertheless chosen our current
definition \eqref{2}  is that
it yields an $\breve{r}_a$ of the form 
\begin{equation} \label{residual_form}
 \breve{r}_a(v): = \sum_{T \in \tria[a]} \int_T v \psi_a 
 \phi_T+\sum_{e\in \intedges[a]} \int_e v \phi_e \quad (v \in H^1(\omega_a)),
\end{equation}
for some $\phi_T \in P_{p_T-1}(T)$ and $\phi_e \in P_{p_a}(e)$, and thus with 
$\psi_a \phi_T$ being a polynomial of degree $p_T$ that vanishes at $\partial \omega_a$. With the alternative definition, that allows for a slightly simpler solution of the mixed system \eqref{mixed}, the form of $\breve{r}_a(v)$ would be similar, except that $\psi_a \phi_T$ would read as
a polynomial of degree $p_a$, without boundary conditions.
Potentially, the absence of these boundary conditions makes our task of ensuring the saturation property \eqref{y3} more difficult.
(Actually, in \eqref{residual_form} also $\phi_e$ could be read as a product of a polynomial of degree $p_a-1$ and $\psi_a$, i.e., as 
a polynomial of degree $p_a$ that vanishes at $\partial \omega_a$, but we will not be able to benefit from this extra property.)
\end{remark}

\begin{remark}[alternative mixed FEMs]
Instead of using Raviart-Thomas elements, computable indicators can equally well be defined in terms of Brezzi-Douglas-Marini
or Brezzi-Douglas-Duran-Fortin mixed finite elements.
 \end{remark}

\section{Reducing the saturation problem from a star to a triangle} \label{Ssaturation}
We recall the task \eqref{saturation_task} of finding a function
$q:\N \rightarrow \N$ such that for some constant $\sigma \in (0,1]$, 
$$
\|\breve{r}_a\|_{(H^1_*(\omega_a)\cap \prod_{T \in \tria[a]} \P_{p_a+q(p_a)}(T))'} \geq \sigma \|\breve{r}_a\|_{H^1_*(\omega_a)'} \quad(a \in \nodes).
$$
In this section, we reduce this task on patches to similar tasks on a single `reference'  triangle
$\check{T}$, with edges $\check{e}_1$, $\check{e}_2$, and $\check{e}_3$.
We make use of the following two lemmas.

\begin{lemma}[$p$-robust polynomial inverse of the divergence] \label{internal} For $\phi_{\check{T}} \in \P_p(\check{T})$ there exists a $\vec{\sigma}_{\check{T}} \in \RT_p(\check{T})$ with
$$
\divv \vec{\sigma}_{\check{T}}= \phi_{\check{T}} \quad\text{and}\quad \|\vec{\sigma}_{\check{T}}\|_{\check{T}} \lesssim \|\phi_{\check{T}}\|_{H^1_0(\check{T})'}.
$$
\end{lemma}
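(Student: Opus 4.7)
One direction is immediate: for every $\vec\sigma_{\check T}\in H(\divv;\check T)$ with $\divv\vec\sigma_{\check T}=\phi_{\check T}$, integration by parts against $v\in H^1_0(\check T)$ gives $\langle\phi_{\check T},v\rangle_{\check T}=-\langle\vec\sigma_{\check T},\nabla v\rangle_{\check T}$, whence $\|\phi_{\check T}\|_{H^1_0(\check T)'}\le\|\vec\sigma_{\check T}\|_{\check T}$. In fact, if $v_\star\in H^1_0(\check T)$ solves $-\Delta v_\star=\phi_{\check T}$, then the continuous field $\vec w_\star:=-\nabla v_\star$ already realizes equality on $H(\divv;\check T)$: $\divv\vec w_\star=\phi_{\check T}$ and $\|\vec w_\star\|_{\check T}=\|\nabla v_\star\|_{\check T}=\|\phi_{\check T}\|_{H^1_0(\check T)'}$. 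The content of the lemma is therefore that this continuous minimiser can be replaced, at the cost of only a $p$-independent constant, by a field in the finite-dimensional space $\RT_p(\check T)$.

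To realise this replacement, I would cast the problem as the Raviart--Thomas mixed Poisson system on $\check T$: find $(\vec\sigma_{\check T},u_p)\in\RT_p(\check T)\times\P_p(\check T)$ with
\begin{align*}
\langle\vec\sigma_{\check T},\vec\tau\rangle_{\check T}+\langle u_p,\divv\vec\tau\rangle_{\check T}&=0,&\vec\tau&\in\RT_p(\check T),\\
\langle\divv\vec\sigma_{\check T},q\rangle_{\check T}&=\langle\phi_{\check T},q\rangle_{\check T},&q&\in\P_p(\check T).
\end{align*}
Since $\divv:\RT_p(\check T)\to\P_p(\check T)$ is surjective and $\phi_{\check T}\in\P_p(\check T)$, the second equation forces $\divv\vec\sigma_{\check T}=\phi_{\check T}$. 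Equipping $\RT_p(\check T)$ with $\|\cdot\|_{\check T}$ and $\P_p(\check T)$ with $\|\cdot\|_{H^1_0(\check T)'}$, the kernel coercivity of the first bilinear form is automatic (it is the $L_2$-norm squared on $\ker\divv$), so the Babu\v{s}ka--Brezzi theory yields the desired bound $\|\vec\sigma_{\check T}\|_{\check T}\lesssim\|\phi_{\check T}\|_{H^1_0(\check T)'}$ as soon as one has the $p$-robust inf-sup condition
\[
\inf_{0\ne q\in\P_p(\check T)}\sup_{\vec 0\ne\vec\tau\in\RT_p(\check T)}\frac{\langle q,\divv\vec\tau\rangle_{\check T}}{\|q\|_{H^1_0(\check T)'}\|\vec\tau\|_{\check T}}\gtrsim 1.
\]

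The main obstacle is exactly this inf-sup, which amounts to constructing, for every $q\in\P_p(\check T)$, a $\vec\tau_q\in\RT_p(\check T)$ with $\divv\vec\tau_q=q$ and $\|\vec\tau_q\|_{\check T}\lesssim\|q\|_{H^1_0(\check T)'}$---that is, essentially the very statement being proved. To break the circularity I would either invoke an $L_2$-stable, polynomial-preserving right inverse of the divergence on a triangle of the type constructed by Costabel--McIntosh or by Demkowicz--Gopalakrishnan--Sch\"oberl, or mimic the two-step construction underlying Braess--Pillwein--Sch\"oberl \cite{33}: first absorb the boundary normal traces of the continuous minimiser $-\nabla v_\star$ by an edge-wise $p$-stable polynomial extension into $\RT_p(\check T)$, then cancel the remaining interior divergence by a bubble-based $\RT_p$ correction whose $p$-robust $L_2$-stability is controlled in terms of the corrected source; summing the two contributions then delivers the claimed bound.
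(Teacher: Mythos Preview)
The paper does not prove this lemma at all: immediately after stating it, the authors write that it was conjectured in \cite{33} and proved by Costabel and McIntosh \cite{45.496}. So the paper's ``proof'' is a bare citation.

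Your proposal is more elaborate but lands in the same place. The mixed-formulation setup is correct, and you rightly observe that everything hinges on the $p$-robust inf-sup condition---which, as you yourself note, is equivalent to the very statement you are trying to prove. At that point you break the circularity by invoking Costabel--McIntosh, which is exactly the paper's move. So on that branch your proposal and the paper coincide.

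The second branch you sketch---mimicking the Braess--Pillwein--Sch\"oberl two-step construction with edge extensions plus a ``bubble-based $\RT_p$ correction''---is not a proof as written. The difficult step is precisely the interior correction: you need a $\vec\tau\in\RT_p(\check T)$ with prescribed polynomial divergence and $L_2$-norm controlled by the $H^{-1}$-norm of that divergence, uniformly in $p$. That is again the lemma itself, and \cite{33} did not prove it (they stated it as Conjecture~6). The $p$-robust bound genuinely requires the regularized Bogovski\u{\i}/Poincar\'e operator machinery of \cite{45.496}, which is why the paper simply cites it.
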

\noindent
This lemma, formulated as a conjecture in \cite{33}, was later
proved by Costabel and McIntosh \cite{45.496};
see also the `note added to proof' following \cite[Conjecture 6]{33}.
The following lemma was shown by Demkowicz, Gopalakrishnan, and
Sch{\"o}berl \cite[Thm. 7.1]{64.145}.

\begin{lemma}[$p$-robust Raviart-Thomas extension] \label{boundary}
  Given $\phi \in L_2(\partial
  \check{T})$ such that $\phi|_{\check{e}_i} \in \P_p(\check{e}_i)$
  and $\int_{\partial \check{T}} \phi =0$, there exists a
  $\vec{\sigma}_{\check{T}} \in \RT_p(\check{T})$ with
  $\vec{\sigma}_{\check{T}} \cdot \vec{n}_{\check{T}} = \phi$, $\divv
  \vec{\sigma}_{\check{T}}=0$, and
  \[
  \|\vec{\sigma}_{\check{T}}\|_{\check{T}} \lesssim
  \inf_{\{\vec{\tau}_{\check{T}} \in H(\divv;\check{T})\colon \divv
    \vec{\tau}_{\check{T}}=0,\,\vec{\tau}_{\check{T}} \cdot
    \vec{n}_{\check{T}} = \phi\}}
  \|\vec{\tau}_{\check{T}}\|_{\check{T}}.
  \]
\end{lemma}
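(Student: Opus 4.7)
The plan is to reduce the vector problem to a scalar $H^1$-extension problem via a stream-function representation on the simply connected triangle $\check T$, and then invoke a $p$-robust polynomial lifting of the Dirichlet trace. Concretely, pick any competitor $\vec{\tau}_{\check T}\in H(\divv;\check T)$ with $\divv \vec{\tau}_{\check T}=0$ and $\vec{\tau}_{\check T}\cdot \vec n_{\check T}=\phi$. Since $\check T$ is simply connected, I would write $\vec{\tau}_{\check T}=\curl \psi$ for some $\psi\in H^1(\check T)$, unique up to an additive constant, and note that pointwise $|\curl \psi|=|\nabla \psi|$. The normal-trace condition translates edge-by-edge into $\partial_t \psi = \pm \phi$, and the hypothesis $\int_{\partial\check T}\phi=0$ guarantees that $\psi|_{\partial\check T}$ is single-valued around the loop. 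Fixing $\int_{\check T}\psi=0$ and using Poincaré, one obtains $\|\psi\|_{H^1(\check T)}\lesssim\|\vec{\tau}_{\check T}\|_{\check T}$.

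Next, the trace $g:=\psi|_{\partial\check T}$ is continuous, and on each edge $\check e_i$ it is an antiderivative of $\phi|_{\check e_i}\in\P_p(\check e_i)$, hence $g|_{\check e_i}\in\P_{p+1}(\check e_i)$, with vertex values fixed by integrating $\phi$ along $\partial \check T$. The trace theorem then yields $\|g\|_{H^{1/2}(\partial\check T)} \lesssim \|\psi\|_{H^1(\check T)} \lesssim \|\vec{\tau}_{\check T}\|_{\check T}$. I would then apply a polynomial-preserving, $p$-robust continuous lifting $E\colon H^{1/2}(\partial\check T)\to H^1(\check T)$ --- available from the earlier papers in the Demkowicz--Gopalakrishnan--Sch\"oberl series --- that maps continuous piecewise polynomials of degree $p+1$ with matching vertex values into $\P_{p+1}(\check T)$ and satisfies $\|Eg\|_{H^1(\check T)}\lesssim \|g\|_{H^{1/2}(\partial\check T)}$ with a constant independent of $p$. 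Set $\psi_p:=Eg\in\P_{p+1}(\check T)$.

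Finally, put $\vec{\sigma}_{\check T}:=\curl\psi_p \in\curl\P_{p+1}(\check T)\subset \RT_p(\check T)$. By construction $\divv\vec{\sigma}_{\check T}=0$; its normal trace on each edge equals the tangential derivative of $g=\psi_p|_{\partial\check T}$, which matches $\phi|_{\check e_i}$ edge by edge, so $\vec{\sigma}_{\check T}\cdot\vec n_{\check T}=\phi$; and $\|\vec{\sigma}_{\check T}\|_{\check T}=\|\nabla\psi_p\|_{\check T}\leq \|\psi_p\|_{H^1(\check T)}\lesssim \|\vec{\tau}_{\check T}\|_{\check T}$. Passing to the infimum over admissible $\vec{\tau}_{\check T}$ yields the claim. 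The main obstacle is squarely Step~3: constructing a polynomial-preserving $H^{1/2}$-lifting on a triangle with a $p$-independent operator norm and the correct vertex compatibility. This is the technical heart of the cited reference, typically carried out by gluing together Jacobi-polynomial edge extensions with a separate vertex-matching step; once that lifting is in hand, the stream-function identification, the trace inequality, and the degree bookkeeping are all routine.
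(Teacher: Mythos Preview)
The paper does not prove this lemma; it is quoted verbatim from \cite[Thm.~7.1]{64.145} with no argument supplied. Your outline is a correct and natural route in two dimensions: the stream-function identifications $\{\vec{\tau}\in H(\divv;\check T):\divv\vec{\tau}=0\}=\curl(H^1(\check T)/\R)$ and $\RT_p(\check T)\cap\ker\divv=\curl(\P_{p+1}(\check T)/\R)$ (the latter is in fact invoked later in the paper, in Sect.~\ref{S:comp-constants}) reduce the $H(\divv)$ extension to the scalar $H^1$ extension of a continuous edgewise-$\P_{p+1}$ Dirichlet trace, after which the whole weight rests on the $p$-robust polynomial-preserving $H^{1/2}\to H^1$ lifting you invoke. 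That lifting is indeed the nontrivial ingredient; it is available from the earlier parts of the Demkowicz--Gopalakrishnan--Sch\"oberl series (and, in 2D, already from the Babu\v{s}ka--Suri and Mu\~noz-Sola line of results). The cited Part~III treats the $H(\divv)$ case directly and in three dimensions, where no scalar stream function is available and the construction is genuinely harder; your 2D reduction is the standard shortcut and is entirely adequate here. Your bookkeeping (degree count, vertex compatibility from $\int_{\partial\check T}\phi=0$, Poincar\'e to control $\|\psi\|_{H^1}$ by $\|\vec{\tau}\|_{\check T}$) is all in order.
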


The announced reduction of the saturation problem is given by the following theorem.
For a Lipschitz domain $\Omega \subset \R^n$, and a $\Gamma \subset
\partial\Omega$ with $\meas(\Gamma)>0$, here and in the following we
use the notation $H^1_{0,\Gamma}(\Omega)$ to denote the closure in
$H^1(\Omega)$ of the space of smooth functions on $\overline{\Omega}$
that vanish at $\Gamma$. We now prove \eqref{saturation_task}.

\begin{theorem}[reduction of $p$-robust saturation property] \label{thm2}
Let us introduce the following three constants on the reference
  triangle $\check{T}$:
\begin{align}
 C^{(1)}_{p,q}&:=\sup_{0 \neq \phi \in \P_{p-1}(\check{T})} \frac{\|\psi \phi\|_{H_{0,\check{e}_1 \cup \check{e}_2}^1(\check{T})'}}{\|\psi \phi\|_{(H_{0,\check{e}_1 \cup \check{e}_2}^1(\check{T})\cap \P_{p+q})'} },
\end{align}
with $\psi \in \P_1(\check{T})$ defined by $\psi(\check{e}_1 \cap \check{e}_2)=1$, $\psi(\check{e}_3)=0$;
\begin{align}
C^{(2)}_{p,q}&:=\sup_{0 \neq \phi \in \P_p(\check{e}_1)} \frac{\|v \mapsto \int_{\check{e}_1} \phi v\|_{H^1_{0,\check{e}_2}(\check{T})'}}{\|v \mapsto \int_{\check{e}_1} \phi v\|_{(H^1_{0,\check{e}_2}(\check{T}) \cap \P_{p+q})'}};\\
C^{(3)}_{p,q}&:=\sup_{\{0 \neq \phi \in \prod_{i=1}^3 \P_p(\check{e}_i)\colon \int_{\partial \check{T}}\phi =0\}} \frac{\|v \mapsto \int_{\partial \check{T}} \phi v\|_{H_*^1(\check{T})'}}{\|v \mapsto \int_{\partial \check{T}} \phi v\|_{(H_*^1(\check{T}) \cap \P_{p+q})'}}.
\end{align}
If for some function $q\colon\N \rightarrow \N$ the quantity
\[
\check{C}:=\displaystyle{\sup_{p \in \N}}
  \max\Big(C^{(1)}_{p,q(p)},C^{(2)}_{p,q(p)},C^{(3)}_{p,q(p)}\Big)
\]
is finite, then there exists a constant $\sigma$ depending on
$\check{C}$ such that
\begin{equation}\label{red-sat}
 \sigma \|\breve{r}_a\|_{H_\ast^1(\omega_a)'} \le \|\breve{r}_a\|_{(H_\ast^1(\omega_a)\cap\prod_{T \in \tria[a]} \P_{p_a+q(p_a)}(T))'} \quad(a \in \nodes).
 \end{equation}
\end{theorem}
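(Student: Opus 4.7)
My plan is to recast \eqref{red-sat} as a polynomial-approximation problem for the Riesz representer of $\breve{r}_a$, and then to factor that approximation through the three reference problems on $\check T$. Let $w\in H_\ast^1(\omega_a)$ be defined by $\langle\nabla w,\nabla v\rangle_{\omega_a}=\breve{r}_a(v)$ for all $v\in H_\ast^1(\omega_a)$, so that $\|\nabla w\|_{\omega_a}=\|\breve{r}_a\|_{H_\ast^1(\omega_a)'}$; write $W_p:=H_\ast^1(\omega_a)\cap\prod_{T\in\tria[a]}\P_{p_a+q(p_a)}(T)$ and let $w_p\in W_p$ be the Galerkin projection of $w$. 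Pythagoras together with $\|\nabla w_p\|_{\omega_a}=\|\breve{r}_a\|_{W_p'}$ turns \eqref{red-sat} into the equivalent energy-approximation bound $\|\nabla(w-w_p)\|_{\omega_a}^2\le(1-\sigma^2)\|\nabla w\|_{\omega_a}^2$, which by Galerkin quasi-optimality will follow from exhibiting any single $\tilde w\in W_p$ with this relative error.

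To construct $\tilde w$ I would split $w=w^{\mathrm{bub}}+w^{\mathrm{skel}}$, where $w^{\mathrm{bub}}|_T$ solves $-\Delta w^{\mathrm{bub}}|_T=\psi_a\phi_T$ on each $T\in\tria[a]$ with zero trace on the two star-interior edges of $T$ and natural (or, for $a\in\bdrnodes$, Dirichlet) boundary condition on the star-boundary edge. Mapping each $T$ affinely onto $\check T$ so that $a\mapsto\check e_1\cap\check e_2$ and the star-boundary edge of $T$ corresponds to $\check e_3$, the local problem for $w^{\mathrm{bub}}|_T$ is exactly the Riesz problem whose saturation ratio defines $C^{(1)}_{p_T,q(p_T)}$, since $\psi_a\phi_T$ pulls back to $\psi\phi$ with $\phi\in\P_{p_T-1}(\check T)$ and the test space is $H_{0,\check e_1\cup\check e_2}^1(\check T)$. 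The complementary $w^{\mathrm{skel}}=w-w^{\mathrm{bub}}$ is continuous on $\omega_a$, piecewise harmonic on $\tria[a]$, and carries its data on the one-skeleton and on $\partial\omega_a$: each interior-edge contribution pulls back to the Riesz problem with edge source on a single edge of $\check T$ tested against functions vanishing on another edge, controlled by $C^{(2)}_{p_a,q(p_a)}$; the boundary-edge part pulls back on each triangle to the mean-zero all-Neumann boundary-source problem controlled by $C^{(3)}_{p_a,q(p_a)}$, and is automatically absent on triangles whose star-boundary edge lies on $\partial\Omega$. The hypothesis $\max(C^{(1)}_{p,q(p)},C^{(2)}_{p,q(p)},C^{(3)}_{p,q(p)})\le\check C$ then, through the Riesz isomorphism, supplies a polynomial approximation of each piece in the appropriate subspace of $\P_{p_a+q(p_a)}(\check T)$ with relative energy error $\le\sqrt{1-\check C^{-2}}$. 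Summing over the $O(1)$ triangles of $\tria[a]$, with shape regularity absorbing the pullback constants, yields $\tilde w\in W_p$ with $\|\nabla(w-\tilde w)\|_{\omega_a}\le c(\check C)\|\nabla w\|_{\omega_a}$ and $c(\check C)<1$, hence $\sigma=\sqrt{1-c(\check C)^2}>0$.

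The main obstacle I anticipate is arranging that the three locally constructed polynomial pieces glue into a single $H^1$-conforming element of $W_p$ across the interior skeleton. The decomposition is chosen with this in mind: the $C^{(1)}$-bubbles vanish on the interior skeleton by construction and glue trivially, while the $C^{(2)}$-edge approximation on each $e\in\intedges[a]$ must produce a polynomial trace of degree $p_a+q(p_a)$ on $e$ that is inherited identically by both incident triangles, and the $C^{(3)}$-correction must be chosen so that its interior-skeleton trace agrees with the one already selected at the $C^{(2)}$-step. Carrying out this \emph{skeleton-first, extend-second} assembly without losing the local energy estimates is the nontrivial step; modulo that, the triangle inequality across the three summands and across the $O(1)$ elements of $\tria[a]$ delivers the claimed $\sigma=\sigma(\check C)$.
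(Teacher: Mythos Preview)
Your reformulation via the Riesz representer $w$ and the Pythagorean identity is correct, but the construction of $\tilde w$ has two genuine gaps, and the gluing issue you flag as ``the main obstacle'' is in fact the least of them.

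First, the splitting $w=w^{\mathrm{bub}}+w^{\mathrm{skel}}$ destroys the polynomial structure that $C^{(2)}_{p,q}$ and $C^{(3)}_{p,q}$ require. Your $w^{\mathrm{bub}}|_T$ is the \emph{exact} solution of a Poisson problem with polynomial source, hence is itself not a polynomial; its normal derivative on the interior edges of $T$ is therefore not polynomial, and the edge data carried by $w^{\mathrm{skel}}$ (namely $\phi_e$ minus the jump of $\partial_n w^{\mathrm{bub}}$) lie outside $\P_{p_a}(e)$. Since the suprema defining $C^{(2)}$ and $C^{(3)}$ range only over polynomial edge data, they give no control over $w^{\mathrm{skel}}$.

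Second, even if each piece $w_j$ could be approximated with relative energy error $\delta:=\sqrt{1-\check C^{-2}}$, the step ``summing over the $O(1)$ triangles \dots yields $c(\check C)<1$'' does not follow. You would obtain only $\|\nabla(w-\tilde w)\|\le\delta\sum_j\|\nabla w_j\|$, and even granting a stable decomposition $\sum_j\|\nabla w_j\|\le K\|\nabla w\|$ (itself to be proven), the conclusion $K\delta<1$ forces the extraneous restriction $\check C<(1-K^{-2})^{-1/2}$. The theorem claims a $\sigma>0$ for \emph{every} finite $\check C$, so this route cannot close.

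The paper avoids both issues by staying on the dual side and never passing to $w$. It lifts $\psi_a\phi_T$ on each $T$ to a field $\vec\sigma_T\in\RT_{p_T}(T)$ via Lemmas~\ref{internal} and~\ref{boundary}; the normal traces of $\vec\sigma_T$ are polynomials of degree $p_a$, so subtracting the associated functionals from $\breve r_a$ leaves a purely edge-supported residual $r_a^{(0)}$ whose data $\phi_e^{(0)}$ \emph{are} in $\P_{p_a}(e)$, and $C^{(2)},C^{(3)}$ then apply to a sequential peeling $r_a^{(0)},r_a^{(1)},\dots,0$. Crucially, the constants $C^{(i)}$ are used not to bound an approximation error by a fraction of a piece's own norm, but to bound the \emph{full} dual norm of each piece by its \emph{discrete} dual norm on a single triangle; the latter is then dominated by $\|\breve r_a\|_{W_p'}$ via zero-extension (or reflection) of polynomial test functions from one triangle to the whole star. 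Summing these inequalities gives $\|\breve r_a\|_{H^1_\ast(\omega_a)'}\lesssim\|\breve r_a\|_{W_p'}$ directly, with no ``strictly less than one'' step required.
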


\begin{proof} This proof consists of parts (A) and (B) below.
It builds on the technique developed in \cite[Proof of Theorem 7]{33}.
Recall, from \eqref{residual_form}, that
$\breve{r}_a$ has the form
$$
 \breve{r}_a(v): = \sum_{T \in \tria[a]} \int_T v \psi_a
 \phi_T+\sum_{e\in \intedges[a]} \int_e v \phi_e
 \quad (v \in H^1(\omega_a)),
 $$
for some $\phi_T \in P_{p_T-1}(T)$ and $\phi_e \in P_{p_a}(e)$.
  
Part (A) deals with the first term of $\check{r}_a$ whereas part (B)
handles the second one.
In fact, in (A) we use $\sup_{p \in \N} C^{(1)}_{p,q(p)}<\infty$
to construct $r_T \in H_*^1(\omega_a)'$ for each $T \in \tria[a]$ such that
\begin{equation} \label{x2}
\|r_T\|_{H_*^1(\omega_a)'} \lesssim \|\breve{r}_a\|_{(H_\ast^1(\omega_a)\cap\prod_{T \in \tria[a]} \P_{p_a+q(p_a)}(T))'},
\end{equation}
and decompose $\breve{r}_a$ as follows
$$
r_a^{(0)}:=\breve{r}_a+\sum_{T \in \tria[a]} r_T
$$
with $r_a^{(0)}$ satisfying
\begin{align} \label{x3}
r_a^{(0)}(\1)&=\breve{r}_a(\1),\\ \label{x4}
r_a^{(0)}(v)&=\sum_{e \in \intedges[a]} \int_e \phi^{(0)}_e v \quad\text{for some } \phi^{(0)}_e \in \P_{p_a}(e).
\end{align}

We next use $\sup_{p \in \N}\max\big(C^{(2)}_{p,q(p)},C^{(3)}_{p,q(p)}\big)<\infty$
in (B) to construct an $r_a^{(i)} \in H^1_*(\omega_a)'$ for each
$i=1,\ldots,n_a-1$ with $n_a:=\# \tria[a]$, such that
$r_a^{(n_a-1)}=0$ and
\begin{equation} \label{x5}
\|r_a^{(i)}-r_a^{(i-1)}\|_{H_*^1(\omega_a)'} \lesssim \|r^{(i-1)}_a\|_{(H_\ast^1(\omega_a)\cap\prod_{T \in \tria[a]} \P_{p_a+q(p_a)}(T))'}.
\end{equation}

Clearly, inequalities \eqref{x2}, \eqref{x5} imply that 
\begin{equation}\label{ra}
\begin{aligned}
\|r^{(0)}_a\|_{(H_\ast^1(\omega_a)\cap\prod_{T \in \tria[a]} \P_{p_a+q(p_a)}(T))'} &\lesssim \|\breve{r}_a\|_{(H_\ast^1(\omega_a)\cap\prod_{T \in \tria[a]} \P_{p_a+q(p_a)}(T))'}\\
\|r^{(i)}_a\|_{(H_\ast^1(\omega_a)\cap\prod_{T \in \tria[a]} \P_{p_a+q(p_a)}(T))'} &\lesssim \|r^{(i-1)}_a\|_{(H_\ast^1(\omega_a)\cap\prod_{T \in \tria[a]} \P_{p_a+q(p_a)}(T))'},
\end{aligned}
\end{equation}
respectively. Therefore, writing
\[
\breve{r}_a=-\sum_{T \in \tria[a]} r_T+\sum_{i=1}^{n_a-1}
\blue{\big(}r^{(i-1)}_a-r^{(i)}_a \blue{\big)}
\]
and combining \eqref{x2}, \eqref{x5},
and \eqref{ra} gives the asserted estimate \eqref{red-sat}.
\medskip

{\it Part (A): Bulk residual.} Lemma~\ref{internal} shows that there
exists a $\vec{\sigma}^{(1)}_T \in \RT_{p_T}(T)$, for each $T \in\tria[a]$,
such that $\divv \vec{\sigma}^{(1)}_T =\psi_a \phi_T$ and
\begin{equation}\label{sigma-1}
\|\vec{\sigma}_T^{(1)}\|_{T} \lesssim \|\psi_a \phi_T\|_{H^1_0(T)'}.
\end{equation}
For each $T \in \tria[a]$, we will construct a $\vec{\sigma}_T^{(2)} \in \RT_{p_T}(T)$ with $\divv \vec{\sigma}^{(2)}_T=0$,
$\vec{\sigma}_T^{(2)}\cdot \vec{n}_{T}=- \vec{\sigma}_T^{(1)} \cdot
\vec{n}_{T}$ on $e_T=e_{T,\omega_a}:=\partial T \cap \partial\omega_a$
(see Figure \ref{fig1}), and
\begin{equation}\label{sigma-2}
\|\vec{\sigma}^{(2)}_T\|_T \lesssim
\|\psi_a \phi_T\|_{H_{0,\partial T \setminus \{e_T\}}^1(T)'}.
\end{equation}
Then putting
$$
r_T(v):=\langle \vec{\sigma}_T^{(1)}+\vec{\sigma}_T^{(2)},\nabla v\rangle_T
\quad
(v \in H^1_*(\omega_a)),
$$
obviously $r_T(\1)=0$, whence \eqref{x3} is valid, whereas
integration by parts
$$
r_T(v)=-\int_T v \psi_a \phi_T+\int_{\partial T \setminus e_T} v (\vec{\sigma}_T^{(1)}+\vec{\sigma}_T^{(2)})\cdot \vec{n}_T
$$
shows \eqref{x4} for suitable $\phi_e^{(0)} \in \P_{p_a}(e)$.
Finally, \eqref{sigma-1} and \eqref{sigma-2} yield
\begin{align*}
\|r_T\|_{H_*^1(\omega_a)'} &\leq
\|\vec{\sigma}_T^{(1)}+\vec{\sigma}_T^{(2)}\|_T
\\
& \lesssim \|\psi_a \phi_T\|_{H_{0,\partial T \setminus \{e_T\}}^1(T)'}
\lesssim \|\psi_a \phi_T\|_{(H_{0,\partial T \setminus \{e_T\}}^1(T)\cap \P_{p_T+q(p_T)})'},
\end{align*}
where for the last inequality we have applied
$\sup_{p  \in \N} C^{(1)}_{p,q(p)}<\infty$.
To derive \eqref{x2} it remains to prove
\[
\|\psi_a \phi_T\|_{(H_{0,\partial T \setminus \{e_T\}}^1(T)\cap \P_{p_T+q(p_T)})'} \leq 
\|\breve{r}_a\|_{(H_\ast^1(\omega_a)\cap\prod_{T \in \tria[a]} \P_{p_a+q(p_a)}(T))'}.
\]
We proceed as follows: for any
$v \in H_{0,\partial T \setminus \{e_T\}}^1(T)\cap \P_{p_T+q(p_T)}$,
denote its zero extension to $\omega_a$ again by $v$ and set
$\bar{v}:=v-\vol(\omega_a)^{-1}\int_{\omega_a} v
\in H_\ast^1(\omega_a)\cap\prod_{T \in \tria[a]} \P_{p_a+q(p_a)}(T)$.
We then have $\int_T v \psi_a \phi_T = \breve{r}_a(v)=\breve{r}_a(\bar{v})$
thanks to $\breve{r}_a(\1)=0$, while obviously
$\|\nabla v\|_T=\|\nabla \bar{v}\|_{\omega_a}$. This proves the
desired estimate.

It remains to construct $\vec{\sigma}^{(2)} \in \RT_{p_T}(T)$ as required.
In view of Lemma~\ref{boundary}, it is sufficient to construct some $\vec{\tau}_T \in H(\divv;T)$ with 
$\divv \vec{\tau}_T =0$,
$\vec{\tau}_T\cdot \vec{n}_{T}=- \vec{\sigma}_T^{(1)} \cdot \vec{n}_{T}$ on $e_T$, and $\|\vec{\tau}_T\|_T \lesssim \|\psi_a \phi_T\|_{H_{0,\partial T \setminus \{e_T\}}^1(T)'}$.
This $\vec{\tau}_T$ can be chosen as $\nabla w_T$ with
\[
\Delta w_T=0
\quad \text{ in }T,
\qquad
w_T=0 \quad \text{ on }\partial T \setminus \{e_T\},
\qquad
\frac{\partial w_T}{\partial \vec{n}_{T}}=-\vec{\sigma}_T^{(1)} \cdot\vec{n}_{T}
\quad\text{ on } e_T.
\]
In fact, since
\[
\|\nabla w_T\|_T=
\Big\|v \mapsto \int_{e_T} v \vec{\sigma}^{(1)}\cdot
\vec{n}_{T} \Big\|_{H_{0,\partial T \setminus \{e_T\}}^1(T)'}
\]
and
\[
\int_{e_T} v \vec{\sigma}^{(1)}\cdot\vec{n}_{T} =
\int_{T} v \psi_a \phi_T+\vec{\sigma}^{(1)}_T \cdot \nabla v,
\]
by integration by parts, we deduce
\[
\|\nabla w_T\|_T 
\leq \|\psi_a \phi_T\|_{H_{0,\partial T \setminus \{e_T\}}^1(T)'}+\|\vec{\sigma}^{(1)}_T\|_T
\lesssim \|\psi_a \phi_T\|_{H_{0,\partial T \setminus \{e_T\}}^1(T)'}.
\]

\medskip

{\it Part (B): Edge residual}. Consider the notations as indicated in Figure~\ref{fig1}.
\begin{figure}[h]
\input{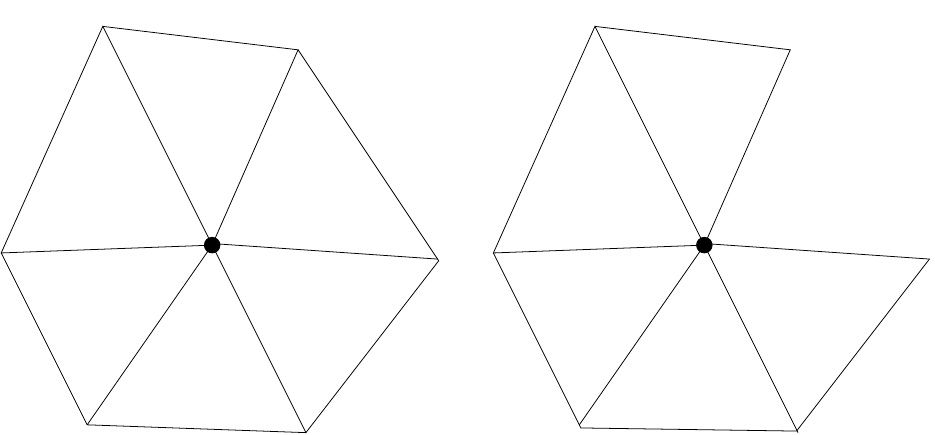_t}
\caption{Enumeration of triangles and edges in $\tria[a]$ for the case $a \in \intnodes$ (left) or  $a \in \bdrnodes$ (right).}
\label{fig1}
\end{figure}
For $i=1,\ldots,n_a-1$, we will construct $r_a^{(i)}\in H^1_*(\omega_a)$, $r_a^{(n_a-1)}=0$, so that \eqref{x5} holds
\begin{align}\label{x6}
  r_a^{(i)}(\1) = r_a^{(i-1)}(\1),
\end{align}
and for some $\phi^{(i)}_{e_j} \in \P_{p_a}(e_j)$
\begin{equation}\label{x7}
  r_a^{(i)}(v) = \sum_{j=i+1}^{n_a} \int_{e_j} \phi^{(i)}_{e_j} v \quad
  (v \in H^1_*(\omega_a)).
\end{equation}

For $a \in \bdrnodes$, $v \mapsto \int_{e_1} \phi^{(0)}_{e_1}v$ is zero as element of  $H_*^1(\omega_a)'$ as $e_1 \subset \partial \Omega$ 
and $v=0$ on $\partial \Omega$. We can thus take $r_a^{(1)}=r_a^{(0)}$.

For $a \in \intnodes$, we will construct a
$\vec{\sigma}_{T_1} \in \RT_{p_a}(T_1)$ with $\divv
\vec{\sigma}_{T_1}=0$, $\vec{\sigma}_{T_1} \cdot \vec{n}_{T_1}=0$ on $e_{T_1}$,
$\vec{\sigma}_{T_1} \cdot \vec{n}_{T_1}=-\phi^{(0)}_{e_1}$ on $e_1$,
with $\phi_{e_1}^{(0)}$ introduced in \eqref{x4}, and
\begin{equation}\label{sigma-T1}
  \|\vec{\sigma}_{T_1}\|_{T_1} \lesssim \Big\|v \mapsto \int_{e_1}
  \phi^{(0)}_{e_1} v \Big\|_{H^1_{0,e_2}(T_1)'}.
\end{equation}
We define
$$
r_a^{(1)}(v):=r_a^{(0)}(v)+ \langle \vec{\sigma}_{T_1},\nabla v\rangle_{T_1}
\quad (v \in H^1(\omega_a)).
$$
Then $r_a^{(1)}(\1)=r_a^{(0)}(\1)$, $r^{(1)}_a(v)= \int_{e_2}
(\phi^{(0)}_{e_2} + \vec{\sigma}_{T_1} \cdot \vec{n}_{T_1})
v+\sum_{i=3}^{n_a} \int_{e_i} \phi^{(0)}_{e_i} v$ for $v\in H^1(\omega_a)$,
and thanks to $\sup_{p \in \N} C^{(2)}_{p,q(p)}<\infty$,
\begin{align*}
\|r_a^{(1)}-r_a^{(0)}\|_{H_*^1(\omega_a)'} &\leq \|\vec{\sigma}_{T_1}\|_{T_1} \lesssim
\Big\|v \mapsto \int_{e_1} \phi^{(0)}_{e_1} v \Big\|_{H^1_{0,e_2}(T_1)'}
\\ &\lesssim
\Big\|v \mapsto \int_{e_1} \phi^{(0)}_{e_1} v \Big\|_{(H^1_{0,e_2}(T_1) \cap \P_{p_a+q(p_a)})'}.
\end{align*}
We observe that in order to show \eqref{x5} for $i=1$ we still need to prove
\[
\Big\|v \mapsto \int_{e_1} \phi^{(0)}_{e_1} v \Big\|_{(H^1_{0,e_2}(T_1) \cap \P_{p_a+q(p_a)})'}
\lesssim 
\|r_a^{(0)}\|_{(H_\ast^1(\omega_a)\cap\prod_{T \in \tria[a]}
  \P_{p_a+q(p_a)}(T))'}.
\]
To do this, we first consider a simple affine transformation that makes
$T_{n_a}$ a reflection of $T_1$ across $e_1$, and extend boundedly by means of reflection
$v \in H^1_{0,e_2}(T_1) \cap \P_{p_a+q(p_a)}$ to a function
$\bar{v} \in H_{0,e_2\cup e_{n_a}}^1(T_1 \cup T_{n_a}) \cap \prod_{i \in \{1,n_a\}}
\P_{p_a+q(p_a)}(T_i)$.
We next identify $\bar{v}$ with its zero extension to the rest of
$\omega_a$ and set
$\bar{\bar{v}}:=\bar{v}-\vol(\omega_a)^{-1}\int_{\omega_a}\bar{v} \in
H_\ast^1(\omega_a)\cap\prod_{T \in \tria[a]} \P_{p_a+q(p_a)}(T)$.
Thanks to $r_a^{(0)}(\1)=0$ we have $\int_{e_1} \phi^{(0)}_{e_1}
v=r_a^{(0)}(\bar{\bar{v}})$ and $\|\nabla \bar{\bar{v}}\|_{\omega_a}
\lesssim \|\nabla v\|_T$.
This proves the desired inequality.

It remains to construct the aforementioned vector field
$\vec{\sigma}_{T_1}$. In view of Lemma~\ref{boundary}, it is
sufficient to construct another vector field $\vec{\tau}_{T_1} \in H(\divv;T_1)$ with $\divv \vec{\tau}_{T_1}=0$, $\vec{\tau}_{T_1} \cdot \vec{n}_{T_1}=0$ on $e_{T_1}$, $\vec{\tau}_{T_1} \cdot \vec{n}_{T_1}=-\phi^{(0)}_{e_1}$ on $e_1$, and $\|\vec{\tau}_{T_1}\|_{T_1} \lesssim \|v \mapsto \int_{e_1} \phi^{(0)}_{e_1} v\|_{H^1_{0,e_2}(T_1)'}$.
This $\vec{\tau}_{T_1}$ can be chosen to be $\nabla w_{T_1}$,
with $w_{T_1}$ being harmonic in $T_1$ and
\[
w_{T_1} = 0 \quad\text{ on } e_2,
\qquad
\frac{\partial w_{T_1}}{\partial \vec{n}_{T_1}}=0 \quad\text { on } e_{T_1},
\qquad \frac{\partial w_{T_1}}{\partial
  \vec{n}_{T_1}}=-\phi^{(0)}_{e_1}
\quad\text{ on } e_1.
\]
This function satisfies
$\|\nabla w_{T_1}\|_{T_1}=\|v \mapsto \int_{e_1} \phi^{(0)}_{e_1} v\|_{H^1_{0,e_2}(T_1)'}$,
which is \eqref{sigma-T1}.

The preceding procedure to construct $r_a^{(1)}$ (for $a \in \intnodes$) can be employed for $a \in \nodes=\intnodes \cup \bdrnodes$  to construct $r_a^{(2)},\ldots,r_a^{(n_a-2)}$, where \eqref{x5}, \eqref{x6}, and \eqref{x7} will be realized.

There remains the final case $i=n_a-1$. For $a \in \bdrnodes$, the same procedure can be applied another time, which yields \eqref{x5} for $i=n_a-1$, whereas \eqref{x7} shows that the resulting $r_a^{(n_a-1)}$ is zero as a functional on $H^1_*(\omega_a)$.

For $a \in \intnodes$, we set $r_a^{(n_a-1)}:=0$ as anticipated. 
In view of \eqref{x7}, as well as \eqref{x6} and \eqref{x3}, we deduce
\[
\sum_{j=n_a-1}^{n_a} \int_{e_j}
\phi_{e_j}^{(n_a-2)}=r_a^{(n_a-2)}(\1)=\breve{r}_a(\1)=0.
\]
We can thus apply $\sup_{p \in \N} C^{(3)}_{p,q(p)}<\infty$ to
$r_a^{(n_a-2)}=r_a^{(n_a-2)}-r_a^{(n_a-1)}$ to obtain
\begin{align*} 
\|r_a^{(n_a-2)}\|_{H^1_*(\omega_a)'} \leq \|r_a^{(n_a-2)}\|_{H^1_*(T_{n_a-1})'} \lesssim \|r_a^{(n_a-2)}\|_{(H^1_*(T_{n_a-1})\cap \P_{p_a+q(a)})'},
\end{align*}
where the first inequality follows from $r_a^{(n_a-2)}(\1)=0$, and the
fact that for each $v \in H^1_*(\omega_a)$, $\|\nabla
(v|_T-\vol(T)^{-1}\int_T v)\|_T \leq \|\nabla v\|_{\omega_a}$.
It remains to prove the estimate
\begin{equation}\label{lastinequality}
\|r_a^{(n_a-2)}\|_{(H^1_*(T_{n_a-1})\cap \P_{p_a+q(a)})'}
 \lesssim  
\|r_a^{(n_a-2)}\|_{(H_\ast^1(\omega_a)\cap\prod_{T \in \tria[a]} \P_{p_a+q(p_a)}(T))'}.
\end{equation}
We distinguish between $n_a$ even and odd. In the former case,
we use
that there exists a bounded extension of $H^1_*(T_{n_a-1})\cap \P_{p_a+q(p_a)}$ to $H^1_*(\omega_a)\cap \prod_{T \in \tria[a]} \P_{p_a+q(p_a)}(T)$.
Indeed, such an extension can be constructed by repeated reflections (modulo simple affine transformations) over $e_{n_a-1},\ldots, e_1$. 

For $n_a$ odd, instead, we follow the same procedure of reflecting a given $v \in H^1_*(T_{n_a-1})\cap \P_{p_a+q(p_a)}$
over $e_{n_a-1},\ldots, e_2$.
Then on the last triangle $T_{n_a}$, we are left with the problem
of finding an $H^1(T_{n_a})$-bounded extension of given 
$\sigma_i \in H^{\frac{1}{2}}(e_i) \cap \P_{p_a+q(p_a)}$ for $i \in
\{1,n_a\}$, to a polynomial of degree $p_a+q(p_a)$ on $T_{n_a}$.
Since the $\sigma_i$'s connect continuously at
$\{a\}=e_{1} \cap e_{n_a}$,
because of the structure of repeated reflections,
suitable extensions are known from the literature; see e.g. \cite[Lemma 7.2]{18.645}.
The resulting extension $\bar{v} \in H^1(\omega_a)\cap \prod_{T \in \tria[a]} \P_{p_a+q(p_a)}(T)$ of $v$ constructed in this way satisfies $\|\nabla \bar{v}\|_{\omega_a} \lesssim \|\nabla v\|_{T_{n_a-1}}$.
Finally, setting $\bar{\bar{v}}:=\bar{v}-\vol(\omega_a)^{-1}\int_{\omega_a} \bar{v} \in H_*^1(\omega_a)\cap \prod_{T \in \tria[a]} \P_{p_a+q(p_a)}(T)$,
we have $\|\nabla \bar{\bar{v}}\|_{\omega_a} \lesssim \|\nabla
v\|_{T_{n_a-1}}$, as well as
$r_a^{(n_a-2)}(\bar{\bar{v}})=r_a^{(n_a-2)}(v)$, thanks to $r_a^{(n_a-2)}(\1)=0$. 
 This completes the proof of \eqref{lastinequality}, and with that the proof of the theorem.
\end{proof}

\begin{remark}[one dimensional case: star indicator]
A simplified version of the proof of Theorem~\ref{thm2} shows that the corresponding result in one dimension is valid under the conditions that for some $q=q(p)$,
\begin{align} \label{first}
\sup_{p \in \N} & \sup_{0 \neq \phi \in \P_{p-1}(-1,1)}  \frac{\|(1-x)\phi\|_{H^1_{0,\{-1\}}(-1,1)'}}{\|(1-x)\phi\|_{(H^1_{0,\{-1\}}(-1,1) \cap \P_{p+q})'}}<\infty, \\
\label{second}
\sup_{p \in \N} &\,\,\frac{\|v \mapsto v(1)\|_{H^1_{0,\{-1\}}(-1,1)'}}{\|v \mapsto v(1)\|_{(H^1_{0,\{-1\}}(-1,1) \cap \P_{p+q})'}}  < \infty.
\end{align}

The numerator in \eqref{second} is equal to $\|u'\|_{(-1,1)}$, where
$u$ solves
\[
-u''=0 \quad\text{ on } (-1,1), \qquad u(-1)=0, \quad u'(1)=1,
\]
whereas the denominator is the $L_2(-1,1)$-norm of the
derivative of its Galerkin approximation from $H^1_{0,\{-1\}}(-1,1)
\cap \P_{p+q}$. Since $u(x)=x+1$ implies $u \in \P_1(-1,1)$,
\eqref{second} is obviously true even for $q=0$. \medskip

Similarly, the numerator of \eqref{first} is equal to $\|u'\|_{(-1,1)}$, where $u=u(\phi)$ solves
\[
-u'' = (1-x)\phi \quad  \text{ on } (-1,1),
\qquad
 u(-1) =  u'(1) = 0,
\]
whereas the denominator is the $L_2(-1,1)$-norm of the derivative of its Galerkin approximation $u_{p+q}=u_{p+q}(\phi)$ from $H^1_{0,\{-1\}}(1,-1) \cap \P_{p+q}$.
Since $u \in \P_{p+2}(-1,1)$, indeed $u(x)= -\int_{-1}^x \int_y^1
(1-z)\phi(z) \,dz\,dy$, obviously \eqref{first} is valid for
$q=2$. This is consistent with the derivation in \cite[Section 4.3]{35.99}.

Let us now take $q=1$, and investigate whether  \eqref{first} is still valid.
Since $H^1_{0,\{-1\}}(-1,1) \cap \P_{p+1} \rightarrow \P_p:v \mapsto
v'$ is surjective, $u_{p+1}'$ is the $L_2(-1,1)$-best approximation to
$u'$ from $\P_p$. Moreover, since $u'\in \P_{p+1}$, we have that
$(u-u_{p+1})'$ is a multiple of the 
Legendre polynomial of degree $p+1$. Since this polynomial does not
vanish at $1$, but $u'$ does, apparently $(u-u_{p+1})'\neq u'$,
whence $u_{p+1} \neq 0$.
From $\P_{p-1}(-1,1)$ being finite dimensional, we conclude that for any 
{\em fixed} $p$, 
$$
\sup_{0 \neq \phi \in \P_{p-1}(-1,1)} \frac{\|(1-x)\phi\|_{H^1_{0,\{-1\}}(-1,1)'}}{\|(1-x)\phi\|_{(H^1_{0,\{-1\}}(-1,1) \cap \P_{p+1})'}}<\infty.
$$
Below we study the question whether this holds {\em uniformly} in $p$, i.e., whether \eqref{first} is valid for $q=1$.

Let $\ell_n$ denote the $L_2(-1,1)$-normalized Legendre polynomial
of degree $n$. Exploiting that $(u-u_{p+q})'$ and $u_{p+q}'$ are
$L_2(-1,1)$-orthogonal, we deduce
$\|u_{p+1}'\|_{(-1,1)}^2$ $=\|u'\|_{(-1,1)}^2-|\langle u',\ell_{p+1}
\rangle_{(-1,1)}|^2$, or equivalently
\[
\frac{\|u'\|_{(-1,1)}}{\|u_{p+1}'\|_{(-1,1)}}=\Big(1-\frac{|\langle u',\ell_{p+1}
\rangle_{(-1,1)}|^2}{\|u'\|_{(-1,1)}^2}\Big)^{-\frac{1}{2}}.
\]
So saturation uniformly in $p$ holds for $q=1$ if and only if 
$$
\sup_{p} \rho_p<1, \text{ with }\rho_p:=\sup_{\phi \in \P_{p-1}(-1,1)} \frac{|\langle u',\ell_{p+1} \rangle_{(-1,1)}|}{\|u'\|_{(-1,1)}}.
$$

Since $\{\ell_1',\cdots,\ell_p'\}$ is a basis for $\P_{p-1}(-1,1)$, we may write $\phi=\sum_{i=1}^p c_i \ell'_i$, where $\vec{c}:=(c_i)_i$ runs over $\R^p$.
With
$$
\alpha_i:={\textstyle \frac{i\sqrt{2i+1}}{(2i+1)\sqrt{2i+3}}}, \quad \beta_i:={\textstyle \frac{(i+1)\sqrt{2i+1}}{(2i+1)\sqrt{2i-1}}},
$$
it holds that
$$
\alpha_i \ell_{i+1}'(x)=x \ell_i'(x)-\beta_i \ell_{i-1}'(x)\quad (i \geq 1),
$$
from which we infer that $(1-x)\phi(x)=\sum_{i=1}^{p+1}d_i \ell_i'(x)$, where $d_{p+1}=-\alpha_p c_p$, $d_{p}=c_p-\alpha_{p-1} c_{p-1}$, and for $i=p-1,\ldots,1$, $d_i=-\beta_{i+1} c_{i+1}+c_i-\alpha_{i-1} c_{i-1}$ (with $c_0:=0$).
Writing these relations as $\vec{d}=T \vec{c}$, where $T \in \R^{(p+1)\times p}$, we conclude that $\vec{d}$ runs over $\ran T=\{\vec{d} \in \R^{p+1}\colon \vec{d}^\top \vec{v}=0\}$ when $\ker T^\top=\Span\{\vec{v}\}$. This $\vec{v}=(v_1,\ldots,v_{p+1})$ can be found as the first $p+1$ elements of
$$
v_1=1,\quad v_2=\alpha_1^{-1},\quad v_i=\alpha_{i-1}^{-1}(v_{i-1}-\beta_{i-2} v_{i-2}) \quad (i=3,4,\ldots).
$$ 

In view of the expression for $(1-x)\phi(x)$,
$u'(x) = \int_1^x (1-z)\phi(z)\,dz$, and $\ell_i(1)=\sqrt{\frac{2i+1}{2}}$, 
we have $u'=\sum_{i=1}^{p+1} d_i \ell_i-\sum_{i=1}^{p+1} \sqrt{2i+1}\, d_i \ell_0$, and so
$$
 \rho^2_p=\sup_{\{\vec{d} \in \R^{p+1}\colon \vec{d}^\top \vec{v}=0\}} \frac{d_{p+1}^2}{\sum_{i=1}^{p+1} d_i^2+\big(\sum_{i=1}^{p+1} \sqrt{2i+1}\, d_i \big)^2}.
$$
For computing this supremum, it is sufficient to consider $d_{p+1}=1$. Setting $\vec{e}:=(d_i)_{1 \leq i \leq p}$, $\vec{w}:=(v_i)_{1 \leq i \leq p}$ and  $\vec{g}:=(\sqrt{2i+1})_{1 \leq i \leq p}$, we find that
$$
\rho^2_p=\Big(2p+4+\min_{\{\vec{e}\in \R^p\colon\vec{e}^\top \vec{w}=-v_{p+1}\}} ((I+\vec{g} \vec{g}^\top) \vec{e})^\top \vec{e}+2 \sqrt{2p+3}\,  \vec{g}^\top \vec{e}\Big)^{-1}.
$$
 The minimizer $\vec{e}$ can be computed as the solution of the saddle-point problem
 $$
 \left[\begin{array}{cc} I+\vec{g} \vec{g}^\top & \vec{w}\\\vec{w}^\top & 0\end{array}\right]
 \left[\begin{array}{c} \vec{e} \\ \lambda \end{array}\right]=
  \left[\begin{array}{c} -\sqrt{2p+3}\,\vec{g} \\ -v_{p+1} \end{array}\right],
 $$
 which gives
 \begin{align*}
 \lambda&=\big(\vec{w}^\top (I+\vec{g} \vec{g}^\top)^{-1} \vec{w}\big)^{-1} \big(v_{p+1}-\vec{w}^\top(I+\vec{g} \vec{g}^\top)^{-1} \sqrt{2p+3}\,\vec{g}\big),\\
 \vec{e}&=-(I+\vec{g} \vec{g}^\top)^{-1}(\sqrt{2p+3}\,\vec{g}+\lambda \vec{w}).
 \end{align*}%
   The computed values of $\rho^2_p$ given in Table~\ref{table1}
   \begin{table}[h]
  \begin{tabular}{|c|c|c|c|}
  \hline
  $p$ & $10$ & $100$ & $10000$\\
  \hline
  $\rho^2_p$ & 0.5719 & 0.9402 & 0.9994\\
 \hline
  \end{tabular}
 \medskip
  \caption{Computed values of ${\rho}^2_p$.}
  \label{table1}
  \end{table}
indicate that $\lim_{p \rightarrow \infty}  {\rho}_p=1$
which means that \eqref{first} is {\em not} valid for $q=1$. Apparently this is a price to be paid for the localization of the residuals using the partition of unity defined by the linear hats.
  Indeed, in this one-dimensional setting a localization is possible that allows for $q=1$, which we discuss in the next remark.
\end{remark}

\begin{remark}[one dimensional case: element indicator]
Let $\tria~$ be a subdivision of an interval $\Omega$ into subintervals.
With $I_{\tria}$ being the continuous linear interpolant w.r.t. $\tria$, the splitting $v=I_{\tria} v+\sum_{T \in \tria} (v-I_{\tria}v)|_T$ gives rise to the following orthogonal decomposition of $H^1_0(\Omega)$:
$$
H^1_0(\Omega)=\bigoplus_{T \in \tria} H^1_0(T) \,\,\bigoplus\,\, (H^1_0(\Omega) \cap \prod_{T \in \tria} \P_1(T)).
$$

For any $u_{\tria} \in U_{\tria}$ that satisfies \eqref{linears}, it holds that $r_{\tria}|_{H^1_0(\Omega) \cap \prod_{T \in \tria} \P_1(T)}=0$, and so with $r_T:=r_{\tria}|_{H^1_0(T)}=v \mapsto \int_T (f+u_{\tria}'')v$, we have
$$
\|(u-u_{\tria})'\|_\Omega=\|r_{\tria}\|_{H^{-1}(\Omega)}=\sqrt{ \sum_{T \in \tria} \|r_T\|_{H^{-1}(T)}^2}.
$$

With $\breve{r}_T:=v \mapsto \int_T (Q_{p_T-1,T} f+u_{\tria}'')v$, it holds that
$\big| \sqrt{ \sum_{T \in \tria} \|r_T\|_{H^{-1}(T)}^2}-\sqrt{ \sum_{T \in \tria} \|\breve{r}_T\|_{H^{-1}(T)}^2}\big| \lesssim \osc(f,\tria)$, with this oscillation term defined as in Corollary~\ref{corol1}.
Finally, since the  solution of a Poisson problem on $T$ with right-hand side $Q_{p_T-1,T} f+u_{\tria}'' \in \P_{p_T-1}(T)$ and homogenous Dirichlet boundary conditions on $\partial T$ is in $ \P_{p_T+1}(T)$,
we conclude
that
$\|\breve{r}_T\|_{H^{-1}(T)}=\|\breve{r}_T\|_{(H^1_0(T)\cap \P_{p_T+1})'}$. So with this approach even `full' saturation is obtained by raising the local polynomial degree by only one.
\end{remark}

\section{Computing the constants defined in Theorem~\ref{thm2}} \label{Sconstants}
In this section we discuss a couple of techniques to compute approximations of the
constants $C_{p,q}^{(i)}$ defined in Theorem~\ref{thm2}, and we indicate some choices of functions $q=q(p)$ for which these constants appear to be bounded uniformly in $p$. 

We start by
observing that, applying Riesz' lifts, the constants $C_{p,q}^{(i)}$
satisfy the following relations:
\begin{align*}
C^{(1)}_{p,q}&=\sup_{0 \neq \phi \in \P_{p-1}(\check{T})} \frac{\|\nabla u^{(1)}\|_{\check{T}}}{\|\nabla u^{(1)}_{p+q}\|_{\check{T}}},\\
C^{(2)}_{p,q}&=\sup_{0 \neq \phi \in \P_p(\check{e}_1)} \frac{\|\nabla u^{(2)}\|_{\check{T}}}{\|\nabla u^{(2)}_{p+q}\|_{\check{T}}},\\
C^{(3)}_{p,q}&=\sup_{\{0 \neq \phi \in \prod_{i=1}^3 \P_p(\check{e}_i)\colon \int_{\partial \check{T}}\phi =0\}} \frac{\|\nabla u^{(3)}\|_{\check{T}}}{\|\nabla u^{(3)}_{p+q}\|_{\check{T}}}.
\end{align*}
Hereafter, the functions
\begin{align*}
u^{(1)}_{p+q}&=u^{(1)}_{p+q}(\phi) \in H^1_{0,\check{e}_1 \cup \check{e}_2}(\check{T}) \cap \P_{p+q}(\check{T}),\\
u^{(2)}_{p+q}&=u^{(2)}_{p+q}(\phi) \in H^1_{0,\check{e}_2}(\check{T}) \cap \P_{p+q}(\check{T}),\\
u^{(3)}_{p+q}&=u^{(3)}_{p+q}(\phi) \in H_*^1(\check{T}) \cap \P_{p+q}(\check{T})
\end{align*}
are respectively the solutions of
\begin{equation}\label{riesz}
\begin{aligned}
\int_{\check{T}} \nabla u^{(1)}_{p+q} \cdot \nabla v &= \int_{\check{T}} \psi \phi \, v \quad(v \in H^1_{0,\check{e}_1 \cup \check{e_2}}(\check{T}) \cap \P_{p+q}(\check{T})),\\
\int_{\check{T}} \nabla u^{(2)}_{p+q} \cdot \nabla v &= \int_{\check{e}_1} \phi \, v \quad(v \in H^1_{0,\check{e}_2}(\check{T}) \cap \P_{p+q}(\check{T})),\\
\int_{\check{T}} \nabla u^{(3)}_{p+q} \cdot \nabla v &= \int_{\partial \check{T}} \phi \, v \quad(v \in H_*^1(\check{T}) \cap \P_{p+q}(\check{T})),
\end{aligned}
\end{equation}
 and $u^{(i)}:=u^{(i)}_{\infty}(\phi)$
are the exact solutions (setting $\P_{\infty}(\check{T}):=L_2(\check{T})$).

Since the solutions $u^{(i)}$, and so the constants $C^{(i)}_{p,q}$,
cannot be computed exactly, 
we approximate them by solving the three Poisson problems above
with a polynomial degree $r \gg p+q$ and exploit the fact that
$\lim_{r\to\infty} u_r^{(i)} = u^{(i)}$ in the corresponding closed
subspace of $H^1(\check{T})$.
We thus compute the constants
\begin{align} \label{constant1}
C^{(1)}_{p,q,r}&=\sup_{0 \neq \phi \in \P_{p-1}(\check{T})} \frac{\|\nabla u_r^{(1)}\|_{\check{T}}}{\|\nabla u^{(1)}_{p+q}\|_{\check{T}}},\\ \label{constant2}
C^{(2)}_{p,q,r}&=\sup_{0 \neq \phi \in \P_p(\check{e}_1)} \frac{\|\nabla u_r^{(2)}\|_{\check{T}}}{\|\nabla u^{(2)}_{p+q}\|_{\check{T}}},\\ \label{constant3}
C^{(3)}_{p,q,r}&=\sup_{\{0 \neq \phi \in \prod_{i=1}^3 \P_p(\check{e}_i)\colon \int_{\partial \check{T}}\phi =0\}} \frac{\|\nabla u_r^{(3)}\|_{\check{T}}}{\|\nabla u^{(3)}_{p+q}\|_{\check{T}}},
\end{align}
and expect that for $r$ large enough the values of $C^{(i)}_{p,q}$
would stabilize thereby indicating convergence of $u_r^{(i)}$ to $u^{(i)}$.
This process is documented in the tables below and is not a hidden
saturation assumption because the degree $r$ is not a priori decided
but determined from computations. In Section \ref{S:comp-constants} we
describe an alternative procedure based on the polynomial
structure of the forcing terms that circumvents this limiting process.

The corresponding Galerkin problems
to find $u_r^{(i)}$ are implemented via suitable modal bases of
the Koorwinder-Dubiner type (see, e.g., \cite{CHQZ06},
  Sect. 2.9.1) on the reference triangle $\check{T}=\{x,y \geq -1,
\ x+y \leq 0\}$. Exploiting the warped-tensor-product structure of
these functions, all integrals in the stiffness matrices and right-hand sides
are computed through univariate
Gaussian quadratures that are exact within machine accuracy for their
(polynomial) integrands. The constants of interest are computed by solving
suitable generalized eigenvalue problems.

We prefer this approach over that in Section \ref{S:comp-constants}
because of its relative simplicity when the underlying polynomial
degree $p$ becomes large.

\subsection{Constant $C_{p, q, r}^{(1)}$} \label{S:constant-1}
The following computational results for $C_{p, q, r}^{(1)}$ in
Tables 2 and 3 show that $r=2(p+q)$ yields
stabilization (only for the largest value of $p+q$ in each Table we restrict to this single value of $r$). 
Moreover, the choice $q(p)=p$ gives full saturation
$C_{p,q,r}^{(1)}\approx 1$ whereas $q(p)=p/7$ is more practical and
still gives an acceptable level of saturation. Table 4
for $q(p)=4$ displays a moderate increase of the saturation constant 
$C_{p, q, r}^{(1)}$. 

{\small
\begin{table}[h]\label{T1:q=p}
\caption{\small Here $q = p$, i.e., $p+q=2p$. There is clear evidence of convergence for $r \rightarrow \infty$. Furthermore, the limit constants are uniformly bounded with respect to $p$, and even seem to converge to $1$ (`full' saturation) when $p \rightarrow \infty$.}
\begin{tabular}{ r | r | r | l }
$p$ & $p+q$ & $r$ & $C_{p, q, r}^{(1)}$ \\
\hline \hline
4 & 8 & 16 & 1.0072779439\\
 &  &  32& 1.0072781599 \\
 &  &  64 & 1.0072781600\\
  &  &  128 &    1.0072781600\\
\hline 
 8 & 16 & 32 &  1.0007015305\\ 
   &  & 64 & 1.0007015438 \\ 
      &  & 128 & 1.0007015438 \\ 
\hline      
16 & 32 & 64 &  1.0001682679 \\
& & 96 &  1.0001682633\\
& & 128 & 1.0001682680\\
\hline  
32 & 64 & 128 & 1.0000689675\\
\end{tabular}
\end{table}

\begin{table}[h]\label{T1:q=p/7}
\caption{\small Here $q=\frac17 p$, i.e., $p+q = \frac87 p$. 
Uniform boundedness with respect to $p$ is preserved, and even convergence to $1$ for $p \rightarrow \infty$ seems to be valid.}
\begin{tabular}{ r | r | r | l }
$p$ & $p+q$ & $r$ & $C_{p, q, r}^{(1)}$ \\
\hline \hline
14 & 16 &  32 & 10.109219047 \\ 
& &  64 &  10.109454622\\ 
& &  128 & 10.109454650\\
\hline
 28 & 32 & 64 &  1.6580711707 \\
  &  & 128 &   1.6580859228\\
\hline
56 & 64 &  128 &  1.3327470997\\
\end{tabular}
\end{table}

\begin{table}[h]\label{T1:q=4}
\caption{\small Here $q=4$, i.e., $p+q=p+4$. 
Only the largest value of $r$ used in computation is reported. In this case, the constants $C_{p, q,r}^{(1)}$ (slowly) increase for $p \rightarrow \infty$.}
\begin{tabular}{ r | r | r | l }
$p$ & $p+q$ & $r$ & $C_{p, q, r}^{(1)}$ \\
\hline \hline
4 & 8 & 32 &  1.0072781599\\
12 & 16 &  64&   1.1590636448 \\ 
28 & 32 &  96 &   1.6580856832 \\
60 & 64 &  128 &   2.7635533362\\
\end{tabular}
\end{table}
}

\subsection{Constant $C_{p, q, r}^{(2)}$} \mbox{}
The same comments of Section \ref{S:constant-1} are valid here, although stabilization occurs at larger values of $r$.

{\small
\begin{table}[h]
\caption{\small Here $q = p$, i.e., $p+q=2p$. There is clear evidence of convergence for $r \rightarrow \infty$. Furthermore, the limit constants are uniformly bounded with respect to $p$,
and even seem to converge to $1$ when $p \rightarrow \infty$.}
\begin{tabular}{ r | r | r | l }
$p$ & $p+q$ & $r$ & $C_{p, q, r}^{(2)}$ \\
\hline \hline
4 & 8 & 16 & 1.1500400619\\
 &  &  32 &  1.1608825787\\
 &  &  64 & 1.1616050286\\
  &  &  128 & 1.1616516366\\
\hline 
 8 & 16 & 32 & 1.0928924221 \\ 
   &  & 64 &  1.0992140060\\ 
      &  & 128 &  1.0996224599\\ 
\hline      
16 & 32 & 64 & 1.0708125134  \\
& & 96 &  1.0747936682\\
& & 128 & 1.0754714541\\
\hline  
32 & 64 & 128 & 1.0611369396\\
\end{tabular}
\end{table}

\begin{table}[h]
\caption{\small Here $q=\frac17 p$, i.e., $p+q = \frac87 p$. 
Uniform boundedness with respect to $p$, and even convergence to $1$ for $p \rightarrow \infty$ seem to be preserved.}
\begin{tabular}{ r | r | r | l }
$p$ & $p+q$ & $r$ & $C_{p, q, r}^{(2)}$ \\
\hline \hline   
14 & 16 &  32 & 2.6706917112 \\ 
& & 64 &   2.7805456663\\
& & 128 &   2.7877362832\\
\hline
 28 & 32 & 64 &   2.0554724235\\
 & & 128 &   2.1293823858\\
 \hline
56 & 64 &  128 &  1.9013521194\\
\end{tabular}
\end{table}

\begin{table}[h]
\caption{\small Here $q=4$, i.e., $p+q=p+4$. 
Only the largest value of $r$ used in computation is reported. The constants $C_{p, q,r}^{(2)}$ (slowly) increase for $p \rightarrow \infty$.
}
\begin{tabular}{ r | r | r | l }
$p$ & $p+q$ & $r$ & $C_{p, q, r}^{(2)}$ \\
\hline \hline
4 & 8 & 32 & 1.1608825787 \\
12 & 16 &  64& 1.5500093739   \\ 
28 & 32 &  96 &  2.1185446449 \\
60 & 64 &  128 & 2.7568423884 \\
\end{tabular}
\end{table}
}

\subsection{Constant $C_{p, q, r}^{(3)}$} \mbox{}
The same comments of Section \ref{S:constant-1} are valid here.

{\small
\begin{table}[h]
\caption{\small Here $q = p$, i.e., $p+q=2p$. There is clear evidence of convergence for $r \rightarrow \infty$. 
Furthermore, the limit constants are uniformly bounded with respect to $p$,
and even seem to converge to $1$ when $p \rightarrow \infty$.}
\begin{tabular}{ r | r | r | l }
$p$ & $p+q$ & $r$ & $C_{p, q, r}^{(3)}$ \\
\hline \hline
4 & 8 & 16 & 1.0316563321\\
&  &  32 & 1.0318040514\\
&  &  64 & 1.0318046947\\
&  &  128 & 1.0318046973\\
\hline 
 8 & 16 & 32 &  1.0135088572\\ 
   &  & 64 &  1.0135679473\\ 
      &  & 128 &  1.0135681920\\ 
\hline      
16 & 32 & 64 &  1.0081863729 \\
& & 96 &  1.0082192602\\
& & 128 & 1.0082204858\\
\hline  
32 & 64 & 128 & 1.0062046674\\
\end{tabular}
\end{table}

\begin{table}[h]
\caption{\small Here $q=\frac17 p$, i.e., $p+q = \frac87 p$. 
Uniform boundedness with respect to $p$, and even convergence to $1$ for $p \rightarrow \infty$ seem to be preserved.}
\begin{tabular}{ r | r | r | l }
$p$ & $p+q$ & $r$ & $C_{p, q, r}^{(3)}$ \\
\hline \hline   
14 & 16 &  32 &  2.2934830389\\ 
& & 64 &   2.3001147590\\
& & 128 &  2.3001422769 \\
\hline
 28 & 32 & 64 &   1.6814554754\\
 & & 128 &   1.6851850386\\
 \hline
56 & 64 &  128 & 1.5469935612 \\
\end{tabular}
\end{table}

\begin{table}[h]
\caption{\small Here $q=4$, i.e., $p+q=p+4$. 
Only the largest value of $r$ used in computation is reported. The constants $C_{p, q,r}^{(3)}$ (slowly) increase for $p \rightarrow \infty$.}
\begin{tabular}{ r | r | r | l }
$p$ & $p+q$ & $r$ & $C_{p, q, r}^{(3)}$ \\
\hline \hline
4 & 8 & 32 &  1.0318046947\\
12 & 16 &  64&   1.2576399758 \\ 
28 & 32 &  96 &   1.6850507900\\
60 & 64 &  128 &  2.2721068822\\
\end{tabular}
\end{table}
}

\subsection{Equivalent computable constants}\label{S:comp-constants}
%
We now exploit the polynomial structure of the forcing functions
$\phi$ in \eqref{riesz} to show that the quantities $\|\nabla u^{(i)}\|_{\check{T}}$
can be computed via suitable saddle point problems with Raviart-Thomas
elements.

The first two paragraphs of Part (A) of the proof of Theorem~\ref{thm2} show that, for a given $\phi \in P_{p-1}(\check{T})$, there exists a $\vec{\sigma} \in \RT_p(\check{T})$ with $\divv  \vec{\sigma} =\psi \phi$ on $\check{T}$, $\vec{\sigma} \cdot \vec{n}_{\check{T}}=0$ on $\check{e}_3$, and $\|\vec{\sigma}\|_{\check{T}} \lesssim 
\|\psi \phi\|_{H^1_{0,\check{e}_1 \cup \check{e}_2}(\check{T})'}$.
The first two properties show that for $v \in H^1_{0,\check{e}_1 \cup \check{e}_2}(\check{T})$, $\int_{\check{T}} \psi \phi v=-\int_{\check{T}} \vec{\sigma} \cdot \nabla v$, and so $\|\psi \phi\|_{H^1_{0,\check{e}_1 \cup \check{e}_2}(\check{T})'} \lesssim \|\vec{\sigma}\|_{\check{T}}$.
We conclude that for $\vec{\sigma} \in \RT_p(\check{T})$ with $\divv  \vec{\sigma} =\psi \phi$ on $\check{T}$, $\vec{\sigma} \cdot \vec{n}_{\check{T}}=0$ on $\check{e}_3$ and minimal $\|\vec{\sigma}\|_{\check{T}}$, it holds that 
$$
 \|\vec{\sigma}\|_{\check{T}} \eqsim \|\psi \phi\|_{H^1_{0,\check{e}_1 \cup \check{e}_2}(\check{T})'} (=\|\nabla u^{(1)}\|_{\check{T}}).
$$
This $\vec{\sigma}$ can be computed as the first component of $(\vec{\sigma},t) \in \RT_p(\check{T}) \cap H_{0,\check{e}_3}(\divv;\check{T}) \times \P_p(\check{T})$ that  solves the discrete saddle-point problem
\begin{equation*}\label{comp-const1}
\left\{
\begin{aligned}
\int_{\check{T}} \vec{\sigma} \cdot \vec{\tau}+ 
\int_{\check{T}} t \divv \vec{\tau}
&= 0 &  (\vec{\tau} \in \RT_{p}(\check{T}) \cap H_{0,\check{e}_3}(\divv;\check{T}) ),\\
\int_{\check{T}} r \divv \vec{\sigma} &=  \int_{\check{T}} r \psi \phi & (r \in \P_p(\check{T})).
\end{aligned}
\right.
\end{equation*}
(Here, as expected, $H_{0,\check{e}_3}(\divv;\check{T})$ are the $H(\divv;\check{T})$-functions with vanishing normal components on $\check{e}_3$).

Similarly, as follows by steps made in Part (B) of the proof of Theorem~\ref{thm2}, for $\phi \in \P_p(\check{e}_1)$ there exists a $\vec{\sigma} \in \RT_p(\check{T})$ with $\divv \vec{\sigma}=0$ on $\check{T}$, $\vec{\sigma}  \cdot \vec{n}_{\check{T}}=\phi$ on $\check{e}_1$, 
$\vec{\sigma}  \cdot \vec{n}_{\check{T}}=0$ on $\check{e}_3$,
and $\|\vec{\sigma}\|_{\check{T}} \lesssim 
\|v \mapsto \int_{\check{e}_1} \phi v\|_{H^1_{0,\check{e}_2}(\check{T})'}$.
The first three properties show that for $v \in H^1_{0,\check{e}_2}(\check{T})$, $\int_{\check{e}_1} \phi v=\int_{\check{T}}\vec{\sigma} \cdot \nabla v$, and so $\|v \mapsto \int_{\check{e}_1} \phi v\|_{H^1_{0,\check{e}_2}(\check{T})'} \lesssim \|\vec{\sigma}\|_{\check{T}}$.
We conclude that for  $\vec{\sigma} \in \RT_p(\check{T})$ with $\divv \vec{\sigma}=0$ on $\check{T}$, $\vec{\sigma}  \cdot \vec{n}_{\check{T}}=\phi$ on $\check{e}_1$, 
$\vec{\sigma}  \cdot \vec{n}_{\check{T}}=0$ on $\check{e}_3$ and $\|\vec{\sigma}\|_{\check{T}}$ minimal, it holds that 
$$
\|\vec{\sigma}\|_{\check{T}} \eqsim \|v \mapsto \int_{\check{e}_1} \phi v\|_{H^1_{0,\check{e}_2}(\check{T})'} (=\|\nabla u^{(2)}\|_{\check{T}}).
$$
This $\vec{\sigma}$ is the first component of $(\vec{\sigma},t,t_1) \in \RT_p(\check{T})\cap H_{0,\check{e}_3}(\divv;\check{T}) \times \P_p(\check{T})\times \P_p(\check{e}_1)$ that  solves
\begin{equation*}\label{comp-const2}
\left\{
\begin{aligned}
\int_{\check{T}} \vec{\sigma} \cdot \vec{\tau}+ 
\int_{\check{T}} t \divv \vec{\tau}+\int_{\check{e}_1} t_1 \vec{\tau}\cdot \vec{n}_{\check{T}}
&= 0 &  (\vec{\tau} \in \RT_{p}(\check{T})\cap H_{0,\check{e}_3}(\divv;\check{T})),\\
\int_{\check{T}} r \divv \vec{\sigma}+\int_{\check{e}_1} r_1 \vec{\sigma}\cdot \vec{n}_{\check{T}}  &=  \int_{\check{e}_1} r_1 \phi & (r \in \P_p(\check{T}),\,r_1\in \P_p(\check{e}_1)).
\end{aligned}
\right.
\end{equation*}

Finally, for $\phi \in \prod_{i=1}^3 \P_p(\check{e}_i)$ with $\int_{\partial \check{T}} \phi=0$, from Lemma~\ref{boundary} one infers that there exists a $\vec{\sigma} \in \RT_p(\check{T})$ with $\divv \vec{\sigma}=0$ on $\check{T}$, $\vec{\sigma}  \cdot \vec{n}_{\check{T}}=\phi$ on $\partial \check{T}$, and $\|\vec{\sigma}\|_{\check{T}} \lesssim \|v \mapsto \int_{\partial \check{T}} \phi v\|_{H_*^1(\check{T})'}$. For such a $\vec{\sigma}$ with minimal $\|\vec{\sigma}\|_{\check{T}}$, it holds that
$$
\|\vec{\sigma}\|_{\check{T}} \eqsim \|v \mapsto \int_{\partial \check{T}} \phi v\|_{H_*^1(\check{T})'} (=\|\nabla u^{(3)}\|_{\check{T}}).
$$

Using that $\RT_p(\check{T}) \cap H(\divv 0;\check{T})=\curl (P_{p+1}(\check{T})/\R)$, one infers that 
this $\vec{\sigma}$ is the first component of 
$(\vec{\sigma},t) \in \curl (P_{p+1}(\check{T})/\R) \times \prod_{i=1}^3\P_p(\check{e}_i)$ that  solves
\begin{equation*}\label{compu-const3}
\left\{
\begin{aligned}
\int_{\check{T}} \vec{\sigma} \cdot \vec{\tau}+ 
\int_{\partial \check{T}} t \vec{\tau}\cdot \vec{n}_{\check{T}}
&= 0 &  (\vec{\tau} \in \curl (P_{p+1}(\check{T})/\R)),\\
\int_{\partial \check{T}} r \vec{\sigma}\cdot \vec{n}_{\check{T}}  &=  \int_{\partial \check{T}} r \phi & (r \in \prod_{i=1}^3\P_p(\check{e}_i)).
\end{aligned}
\right.
\end{equation*}

\section{Conclusion} \label{Sconclusion}
We show a $p$-robust contraction property for $hp$-AFEM
whenever a $p$-robust {\it local} saturation property, expressed in
terms of negative norms of residuals on patches of
elements sharing a node (stars),
is valid for all marked stars using D\"orfler marking.
We reduce the question of $p$-robust local saturation for the
Poisson problem in two dimensions to three simpler Poisson problems
over the reference triangle $\check{T}$ with
three different interior or boundary forcing $\phi$ which are
polynomials of degrees $p-1$ or $p$ respectively. If $u^{(i)}(\phi)$ are the exact
solutions of these auxiliary problems in $\check{T}$ for $i=1,2,3$, and
$u_{p+q(p)}^{(i)}(\phi) \in \P_{p+q(p)}(\check{T})$ are the
corresponding discrete solutions with polynomial degree $p+q(p)$,
the question reduces to finding a function $q(p)$ such that the
following saturation constants are uniformly bounded in $p$:
\[
C^{(i)}_{p,q(p)} = \sup_\phi \frac{\|\nabla
  u^{(i)}(\phi)\|_{L_2}(\check{T})}{\|\nabla
  u^{(i)}_{p+q(p)}(\phi)\|_{L_2}(\check{T})}
\quad i=1,2,3.
\]
We provide computational evidence that a function of the form
$q(p)=\lceil \lambda p\rceil$ gives uniform saturation, namely $C^{(i)}_{p,p(q)}$
is bounded as a function of $p$, for any
constant $\lambda>0$. In contrast, we do not observe uniform
saturation for a function $q(p)$ constant. However, for $q(p)=4$ and all
$p \leq 60$, the constants $C^{(i)}_{p,p+4}$ do not exceed $2.77$,
which is still quite close to the ideal value 1.


\end{document}